\newtheorem{theorem}{Theorem}[section]
\newtheorem{proposition}[theorem]{Proposition}
\newtheorem{lemma}[theorem]{Lemma}
\newtheorem{corollary}[theorem]{Corollary}
\theoremstyle{definition}
\newtheorem{definition}[theorem]{Definition}
\newtheorem{question}[theorem]{Question}
\newcommand{\res}{\mathbin{\upharpoonright}}
\newcommand{\seq}[1]{\langle #1 \rangle}
\newcommand{\set}[1]{\{ #1 \}}
\newcommand{\Req}{\mathcal{R}}
\newcommand{\ideal}[1]{\mathcal{#1}}
\newcommand{\collection}[1]{\mathscr{#1}}
\newcommand{\principal}[1]{[#1]^{\leq_T}}
\newcommand{\newauthor}[2]{
\definecolor{#1}{rgb}{#2}
\expandafter\newcommand\csname #1\endcsname[1]%
{{\sethlcolor{#1}\hl{#1 says ``##1.''}}}}
\begin{document}

\title{Genericity for Mathias forcing over general Turing ideals}

\author{Peter A.\ Cholak}
\address{Department of Mathematics\\
University of Notre Dame\\
Notre Dame, Indiana 46556 U.S.A.}
\email{cholak@nd.edu}

\author{Damir D. Dzhafarov}
\address{Department of Mathematics\\
University of Connecticut\\
Storrs, Connecticut U.S.A.}
%196 Auditorium Road\\ Storrs, Connecticut 06269 U.S.A.}
%\curraddr{}
\email{damir@math.uconn.edu}

\author{Mariya I. Soskova}
\address{Faculty of Mathematics and Infromatics\\
Sofia University\\
 5 James Bourchier blvd., Sofia 1164, Bulgaria}
%\curraddr{}
\email{msoskova@fmi.uni-sofia.bg}

\thanks{The second author was partially supported by an NSF Postdoctoral Fellowship and by NSF grant DMS-1400267. The third author was partially supported by an FP7-MC-IOF grant STRIDE (298471). All three authors were additionally partially supported by NSF grant DMS-1101123, which made their initial collaboration possible. The authors thank Rose Weisshaar and the anonymous referee for a number of valuable comments.}

\maketitle

\begin{abstract}
In Mathias forcing, conditions are pairs $(D,S)$ of sets of natural numbers, in which $D$ is finite, $S$ is infinite, and $\max D < \min S$. The Turing degrees and computational characteristics of generics for this forcing in the special (but important) case where the infinite sets $S$ are computable were thoroughly explored by Cholak, Dzhafarov, Hirst, and Slaman~\cite{CDHS-2014}. In this paper, we undertake a similar investigation for the case where the sets $S$ are members of general countable Turing ideals, and give conditions under which generics for Mathias forcing over one ideal compute generics for Mathias forcing over another. It turns out that if $\ideal{I}$ does not contain only the computable sets, then non-trivial information can be encoded into the generics for Mathias forcing over $\ideal{I}$. We give a classification of this information in terms of computability-theoretic properties of the ideal, using coding techniques that also yield new results about introreducibility. In particular, we extend a result of Slaman and Groszek and show that there is an infinite $\Delta^0_3$ set with no introreducible subset of the same degree.
\end{abstract}

\section{Introduction}

Mathias forcing gained prominence in set theory in the article~\cite{Mathias-1977}, for whose author it has come to be named. In a restricted form, it was used even earlier by Soare~\cite{Soare-1969}, to build an infinite set with no subset of strictly higher Turing degree. In computability theory, it has subsequently become a prominent tool for constructing infinite homogeneous sets for computable colorings of pairs of integers, as in Seetapun and Slaman~\cite{SS-1995}, Cholak, Jockusch, and Slaman~\cite{CJS-2001}, and Dzhafarov and Jockusch~\cite{DJ-2009}. It has also found applications in algorithmic randomness, in Binns, Kjos-Hanssen, Lerman, and Solomon~\cite{BKLS-2006}. Dorais~\cite{Dorais-2012} has studied a variant of Mathias forcing that behaves nicely with respect to reverse mathematics.

The conditions in Mathias forcing are pairs $(D,S)$, where $D$ is a finite subset of $\omega$, $S$ is an infinite such subset, and $\max D < \min S$. A condition $(D^*,S^*)$ \emph{extends} $(D,S)$ if $D \subseteq D^* \subseteq D \cup S$ and $S^* \subseteq S$. We think of the finite set $D$ as representing a commitment of information, positive and negative, about a (generic) set to be constructed, and $E$ as representing a commitment of negative information alone.

In computability theory, the interest is typically in Mathias forcing with an imposed effectivity restriction on the conditions used. For instance, in~\cite[Theorem 4.3]{CJS-2001}, the sets $S$ are restricted to be computable, whereas in~\cite[Theorem 2.1]{SS-1995}, the sets are restricted to be members of a Scott set. Many other variants have appeared in the literature. The most general requirement of this form is to restrict the sets $S$ to be members of a fixed countable Turing ideal $\mathcal{I}$.

Our interest in this paper will be in the computability-theoretic properties of generics for the above forcing. A similar analysis for the special case when $\mathcal{I}$ consists just of the computable sets was undertaken by Cholak, Dzhafarov, Hirst, and Slaman~\cite{CDHS-2014}. As we shall see, this situation differs from the general one in a number of important ways. For instance, given a non-computable set $A$, it is always possible to choose a generic for Mathias forcing over the computable ideal that does not compute $A$. We characterize those sets $A$ and ideals $\ideal{I}$ that have this property, and construct ones that do not. Thus, we show there are ideals $\mathcal{I}$ over which every Mathias generic contains some common non-computable information.

The paper is organized as follows. In Section~\ref{S:defns} we give formal definitions of Mathias forcing, and in particular, of how we choose to represent conditions. Section~\ref{S:basics} establishes some basic results and constructions. In Section~\ref{S:comp}, we turn to the computational strength of generics for Mathias forcing, focusing on the problem mentioned above, of which sets are necessarily computed by them. Finally, in Section~\ref{S:ideals}, we compare generics across different ideals.

\section{Background}\label{S:defns}

Throughout, \emph{sets} will refer to subsets of~$\omega$. We shall use standard terminology from computability theory, and refer the reader to Soare~\cite{Soare-TA} for background. For a general introduction to forcing in arithmetic, see Shore~\cite[Section 3]{Shore-TA}.

\begin{definition}
A \emph{(Turing) ideal} is a collection~$\ideal{I}$ of sets closed under~$\leq_T$ and~$\oplus$.
\end{definition}

In this paper we shall be looking at countable ideals only, and so shall avoid explicitly mentioning so henceforth. The simplest ideal is~$COMP$, consisting of all the computable sets, which is of course a sub-ideal of every other ideal. The next simplest example of an ideal is a principal one, consisting of all~$A$-computable sets for some set~$A$. We denote this ideal by~$\principal{A}$ (so that $COMP = \principal{A}$ for any computable set $A$), and note that it can be identified with the set of indices of total~$A$-computable functions, so membership in it is a~$\Sigma^0_3(A)$ relation of sets. Though not every ideal is principal, more general ideals can be presented in a similar way.

\begin{theorem}[Kleene and Post \cite{KP-1954}; Spector \cite{Spector-1956}]
Every ideal~$\ideal{I}$ has an exact pair, i.e., a pair of sets~$A_0,A_1$ so that a set~$S$ belongs to~$\ideal{I}$ if and only if~$S \leq_T A_0$ and~$S \leq_T A_1$.
\end{theorem}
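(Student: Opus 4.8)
The plan is to give the classical Spector‑style construction of an exact pair, built on top of a Kleene--Post finite‑extension argument. First dispose of the easy case: if $\ideal{I}$ has a greatest Turing degree, say $\ideal{I} = \principal{B}$, then $(B,B)$ is already an exact pair. So assume $\ideal{I}$ is non‑principal. Fix an enumeration $\ideal{I} = \set{B_n : n \in \omega}$ and put $C_n = B_0 \oplus \cdots \oplus B_n$; then $C_0 \leq_T C_1 \leq_T \cdots$ is a $\leq_T$-cofinal sequence of members of $\ideal{I}$, and $C_\omega := \bigoplus_n C_n \notin \ideal{I}$, since otherwise it would be a greatest degree. It then suffices to construct sets $A_0, A_1 \subseteq \omega$ meeting, for all $n$ and all $e,j$, the requirements $\mathcal{Q}_n$: ``$C_n \leq_T A_0$ and $C_n \leq_T A_1$'', and $\mathcal{P}_{e,j}$: ``if $\Phi_e^{A_0} = \Phi_j^{A_1}$ is total and is (the characteristic function of) a set, then it is $\leq_T C_n$ for some $n$.'' The $\mathcal{Q}_n$ give $\ideal{I} \subseteq \set{S : S \leq_T A_0 \text{ and } S \leq_T A_1}$ by cofinality and downward closure, and the $\mathcal{P}_{e,j}$ give the reverse inclusion.

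I would build $A_0$ and $A_1$ as unions of increasing sequences of finite binary strings, produced in stages by a finite‑extension argument; the construction itself may consult an arbitrarily powerful oracle, since it is only $A_0$ and $A_1$, not the construction, that must be kept weak. A condition records the two finite strings approximating $A_0$ and $A_1$ together with a finite amount of bookkeeping: which ``coding regions'' have been activated and the markers from which $C_n$ is being copied into $A_0$ and (in a disjoint region) into $A_1$. Coding steps simply append the next relevant bit of some $C_n$. Here it is essential that the coding be arranged \emph{non‑uniformly} --- the markers are chosen during the construction and are not computably bounded --- so that neither $A_0$ nor $A_1$ uniformly computes all the $C_n$; this is not optional, since a uniform coding would put $C_\omega \notin \ideal{I}$ below both $A_0$ and $A_1$.

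The heart of the matter is the treatment of $\mathcal{P}_{e,j}$. Given the current condition, I ask whether some extension forces $\Phi_e^{A_0}(x)\downarrow \ \neq\ \Phi_j^{A_1}(x)\downarrow$ for some $x$. If so, pass to such an extension; then the two functionals disagree permanently and $\mathcal{P}_{e,j}$ is satisfied. If not, then along every extension consistent with the coding commitments the value of $\Phi_e^{A_0}(x)$, when it converges, is forced to agree with the $\Phi_j^{A_1}$‑side; hence if $\Phi_e^{A_0} = \Phi_j^{A_1}$ is total, its common value $g$ can be recovered by searching, for each $x$, for an initial segment $A_0 \res N$ consistent with the coding and with $\Phi_e^{A_0 \res N}(x)\downarrow$, and outputting that value. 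The point of the careful placement of markers is that the only coding‑consistency one must check for the extensions relevant to this $\mathcal{P}_{e,j}$ involves finitely much of finitely many $C_n$, so the search is carried out from a single $C_n$; thus $g \leq_T C_n \in \ideal{I}$.

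The main obstacle is making the two families of requirements coexist. The coding requirements steadily commit more and more of $A_0$ and $A_1$, so I must guarantee both that enough freedom always survives to run the diagonalizations for the $\mathcal{P}_{e,j}$, and --- the genuinely delicate point --- that the ``consistent with coding'' predicate used in the exactness step, restricted to the extensions that matter for a fixed $\mathcal{P}_{e,j}$, depends on only one $C_n$ rather than on $C_\omega$. The standard device is to delay activating coding regions and to push each new marker out beyond the finite strings committed so far, so that when $\mathcal{P}_{e,j}$ is handled only finitely many coding regions are live and all later ones lie above everything then decided. Verifying that this bookkeeping really delivers both properties simultaneously is the one place the argument requires care; everything else is routine finite‑extension forcing.
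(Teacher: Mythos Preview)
The paper does not actually prove this theorem: it is quoted as classical background, attributed to Kleene--Post and Spector, and used without proof. So there is no ``paper's own proof'' to compare against.

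Your sketch is the standard Spector exact-pair construction and is correct in outline. A couple of small points of presentation. First, your phrase ``searching, for each $x$, for an initial segment $A_0 \res N$ consistent with the coding'' is potentially misleading: you are not searching through initial segments of the (as yet unbuilt) set $A_0$, but through \emph{strings} $\sigma$ extending the finite commitment to $A_0$ made so far and respecting the coding in the already-activated columns. The point, which you do state correctly, is that in the ``no splitting'' case any two such $\sigma,\tau$ with $\Phi_e^{\sigma}(x)\downarrow$ and $\Phi_j^{\tau}(x)\downarrow$ must agree, so the search terminates with the right value and uses only $C_n$ for the finitely many $n$ whose coding regions are live at that stage. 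Second, the cleaner way to organize the bookkeeping is simply to use columns: let column $n$ of $A_i$ eventually equal $C_n$ above some finite height chosen at the stage when $\mathcal{Q}_n$ is activated, and handle $\mathcal{P}_{e,j}$ before activating any columns beyond those already live. This makes transparent both that each $C_n \leq_T A_i$ (non-uniformly, via the height) and that the consistency check in the exactness argument for $\mathcal{P}_{e,j}$ depends only on $C_0 \oplus \cdots \oplus C_{m-1} \equiv_T C_{m-1}$ for the finitely many live columns $m$. With that said, your plan is sound and would go through.
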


\noindent Membership in such an ideal is then~$\Sigma^0_3(A_0 \oplus A_1)$, and the ideal can be identified with the set of pairs of indices~$\seq{e_0,e_1}$ so that~$\Phi^{A_0}_{e_0}$ and~$\Phi^{A_1}_{e_1}$ are both total and equal. The choice of~$A_0$ and~$A_1$ here is not canonical in any way, but in the sequel we shall implicitly assume that whenever an ideal~$\ideal{I}$ is mentioned a choice of~$A_0$ and~$A_1$ has been made. (Formally, this means different choices of exact pairs give different ideals, even if they are the same as collections of sets.) For simplicity, we suppress mention of~$A_0$ and~$A_1$ when possible: for instance, given~$n \in \omega$, we write~$\Sigma^0_n(\ideal{I})$ instead of~$\Sigma^0_n(A_0 \oplus A_1)$. In a similar spirit, we call~$\ideal{I}$ \emph{arithmetical} if~$A_0$ and~$A_1$ are arithmetical sets.

Recall the following standard terminology.

\begin{definition}
\
\begin{enumerate}
\item A \emph{Mathias condition} is a pair of sets~$(D,S)$ with~$D$ finite,~$S$ infinite, and~$\max D < \min S$.
\item A condition~$(\widetilde{D},\widetilde{S})$ \emph{extends} a condition~$(D,S)$, written~$(\widetilde{D},\widetilde{S}) \leq (D,S)$, if~$D \subseteq \widetilde{D} \subseteq D \cup S$ and~$\widetilde{S} \subseteq S$.
\item A set~$A$ \emph{satisfies} a condition~$(D,S)$ if~$D \subseteq A \subseteq D \cup S$.
\item A set~$A$ \emph{meets} a collection~$\collection{C}$ of conditions if it satisfies a condition in~$\collection{C}$.
\item A set~$A$ \emph{avoids} a collection~$\collection{C}$ of conditions if it satisfies a condition having no extension in~$\collection{C}$.
\end{enumerate}
\end{definition}

We next define the restriction of Mathias forcing to a particular ideal.

\begin{definition}
Let~$\ideal{I}$ be an ideal.
\begin{enumerate}
\item An \emph{$\ideal{I}$-condition} is a Mathias condition~$(D,S)$ such that~$S \in \ideal{I}$.
\item For each~$n \in \omega$, a set~$G$ is \emph{$n$-$\ideal{I}$-generic} if~$G$ meets or avoids every~$\Sigma^0_n(\ideal{I})$-definable collection of~$\ideal{I}$-conditions.
\item A set~$G$ is \emph{$\ideal{I}$-generic} if it is~$n$-$\ideal{I}$-generic for all~$n \in \omega$.
\end{enumerate}
\end{definition}

\noindent Cholak, Dzhafarov, Hirst, and Slaman~\cite[Proposition 2.4]{CDHS-2014} showed that for each~$n \geq 2$, there exists an~$n$-$COMP$-generic set~$G$ with~$G' \leq_T \emptyset^{(n)}$. The argument easily lifts to more general ideals~$\ideal{I}$, yielding~$n$-$\ideal{I}$-generics with~$G' \leq_T \ideal{I}^{(n)}$. We leave the verification to the reader. As discussed in Section 2 of~\cite{CDHS-2014}, it is generally only of interest to consider~$n$-generics for~$n \geq 3$. 

The forcing language over an ideal~$\ideal{I}$ is the usual language of second-order arithmetic augmented by a parameter~$\dot{G}$ for the generic set, but with the modification that it also includes a parameter for~$\ideal{I}$, which is to say, for the exact pair chosen for~$\ideal{I}$. The forcing relation can then be defined in the usual manner (see, e.g.,~\cite{CDHS-2014}, Section 3).

\section{Basic results}\label{S:basics}

We begin by collecting a few basic facts about Mathias generics over arbitrary ideals, which also serve as preliminary results that we shall expound upon in subsequent sections. To begin, we have the following generalization of the well-known fact that Mathias generics are high (see, e.g., \cite{BKLS-2006}, Lemma 6.6, or \cite{CJS-2001}, Section 5.1). For a set~$A$, let~$p_A$ denote the principal function of~$A$.

\begin{proposition}\label{P:domination}
Let~$\ideal{I}$ be an ideal and~$G$ a~$3$-$\ideal{I}$-generic. Then~$p_G$ dominates every function in~$\ideal{I}$.
\end{proposition}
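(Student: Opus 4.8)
The plan is to show, for each function $f \in \ideal{I}$, that the set of $\ideal{I}$-conditions forcing $p_{\dot G}(n) > f(n)$ for all sufficiently large $n$ is dense, and moreover that the relevant collection of conditions is simple enough (i.e., $\Sigma^0_3(\ideal{I})$-definable) that a $3$-$\ideal{I}$-generic must meet it. Actually, it is cleaner to argue density pointwise: fix $f \in \ideal{I}$ and $n \in \omega$, and consider the collection $\collection{C}_{f,n}$ of $\ideal{I}$-conditions $(D,S)$ such that $|D| > n$ and $p_D(n) > f(n)$ (equivalently, $D$ has more than $n$ elements and its $n$-th element, in increasing order, exceeds $f(n)$). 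Any set $G$ satisfying a condition in $\collection{C}_{f,n}$ has $p_G(n) \geq p_D(n) > f(n)$, since $D \subseteq G$ forces the first $|D|$ elements of $G$ to be exactly the elements of $D$. So it suffices to show $G$ meets each $\collection{C}_{f,n}$, and then $p_G(n) > f(n)$ for all $n$, i.e., $p_G$ dominates $f$ everywhere (in particular eventually).

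The two things to check are: (1) $\collection{C}_{f,n}$ is a $\Sigma^0_3(\ideal{I})$-definable collection of $\ideal{I}$-conditions, and (2) $G$ cannot avoid $\collection{C}_{f,n}$, so by $3$-$\ideal{I}$-genericity it must meet it. For (1): the condition ``$(D,S)$ is an $\ideal{I}$-condition'' asserts that $D$ is finite, $\max D < \min S$, and $S \in \ideal{I}$; membership in $\ideal{I}$ is $\Sigma^0_3(\ideal{I})$ by the discussion following the exact-pair theorem, and the remaining clauses are arithmetically simpler, so the whole collection $\collection{C}_{f,n}$ is $\Sigma^0_3(\ideal{I})$ (the extra clauses $|D| > n$ and $p_D(n) > f(n)$ are computable in $f$, hence in $\ideal{I}$). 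For (2): suppose toward a contradiction that $G$ satisfies some condition $(D,S)$ with no extension in $\collection{C}_{f,n}$. Since $S \in \ideal{I}$ is infinite and $f \in \ideal{I}$, we can pick finitely many elements $s_0 < s_1 < \cdots < s_k$ of $S$ above $\max D$ with $k \geq n$ and $s_n > f(n)$ — this is possible precisely because $S$ is infinite. Then $(D \cup \{s_0,\dots,s_k\}, S \setminus [0,s_k])$ is an extension of $(D,S)$ lying in $\collection{C}_{f,n}$, a contradiction. Hence no such $(D,S)$ exists, $G$ does not avoid $\collection{C}_{f,n}$, and so $G$ meets it.

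I do not expect any serious obstacle here; the only point requiring a moment's care is the complexity bookkeeping in step (1) — one must use that membership in $\ideal{I}$ is $\Sigma^0_3(\ideal{I})$ rather than something worse, which is exactly why the hypothesis is $3$-$\ideal{I}$-genericity and not, say, $2$-$\ideal{I}$-genericity. Everything else is the standard ``extend the stem into the infinite reservoir'' density argument familiar from the computable case, and the fact that $D$ is a genuine initial segment of any set satisfying $(D,S)$ is what converts control over $p_D$ into control over $p_G$.
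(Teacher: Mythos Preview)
Your density argument in step~(2) has a genuine gap: the collection~$\collection{C}_{f,n}$ is \emph{not} dense. Concretely, take any condition~$(D,S)$ with~$|D| > n$ and~$p_D(n) \leq f(n)$ (for instance,~$D = \{0,1,\ldots,n\}$ and~$f(n) \geq n$). Since in any extension~$(\widetilde D,\widetilde S) \leq (D,S)$ the new elements of~$\widetilde D$ all lie above~$\max D$, the first~$|D|$ elements of~$\widetilde D$ are exactly those of~$D$; hence~$p_{\widetilde D}(n) = p_D(n) \leq f(n)$, and no extension lies in~$\collection{C}_{f,n}$. Your construction ``pick~$s_0 < \cdots < s_k$ in~$S$ with~$s_n > f(n)$'' silently assumes~$D = \emptyset$: in general~$p_{D \cup \{s_0,\ldots,s_k\}}(n)$ equals~$p_D(n)$ when~$n < |D|$ and~$s_{n-|D|}$ otherwise, not~$s_n$. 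So the displayed extension need not belong to~$\collection{C}_{f,n}$.

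This is not a mere indexing slip: the stronger conclusion you draw, that~$p_G(n) > f(n)$ for \emph{every}~$n$, is actually false. Any~$3$-$\ideal{I}$-generic satisfying the condition~$(\{0\},\omega\setminus\{0\})$ has~$p_G(0)=0$, which fails against any~$f$ with~$f(0)>0$. Your pointwise decomposition therefore cannot establish domination: for each fixed~$n$ the generic may legitimately avoid~$\collection{C}_{f,n}$, and nothing in your argument bounds how many~$n$ this happens for. The paper's remedy is to use a \emph{single} dense collection per~$f$: the set of~$\ideal{I}$-conditions~$(D,S)$ whose reservoir~$S$ has~$p_S$ majorizing~$f$. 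Density holds because one can thin~$S$ to~$\widetilde S \leq_T f \oplus S \in \ideal{I}$ with~$p_{\widetilde S}(i) > f(i)$; meeting this one collection then gives~$p_G(n) > f(n)$ for all~$n$ past~$|D|$, which is domination. Your complexity bookkeeping in step~(1) is fine and carries over unchanged.
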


\begin{proof}
Fix any function~$f \in \ideal{I}$, and let~$\collection{C}$ be the collection of all~$\ideal{I}$-conditions~$(D,S)$ such that~$p_S$ majorizes~$f$. Note that~$\collection{C}$ is~$\Sigma^0_3(\ideal{I})$-definable, and that it is in fact dense. (To see this, consider any~$\ideal{I}$-condition~$(D,S)$. Define~$\widetilde{S} \subseteq S$ inductively by letting~$\widetilde{S}(i)$ be the least element of~$S$ larger than~$f(i)$ and all~$\widetilde{S}(j)$ for~$j < i$. Then~$\widetilde{S} \leq_T f \oplus S$, so~$\widetilde{S}$ belongs to~$\ideal{I}$, and~$(D,\widetilde{S})$ belongs to~$\collection{C}$.) We conclude that any~$3$-$\ideal{I}$-generic set~$G$ meets~$\collection{C}$, so~$p_G$ dominates~$f$.
\end{proof}

Our next results concern the computational strength of generics, which shall be the focus of Section~\ref{S:comp}.

\begin{proposition}\label{P:coneavoidance}
Let~$\ideal{I}$ be an ideal and~$A$ any set not in~$\ideal{I}$. Then there exists a Mathias~$\ideal{I}$-generic set that does not compute~$A$.
\end{proposition}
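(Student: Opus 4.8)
The plan is to build the generic $G$ by a finite-extension argument, meeting enough dense sets to ensure genericity while preserving, at every stage, the requirement $\Phi^G_e \neq A$ for each $e$. We work with a countable enumeration of the $\Sigma^0_n(\ideal{I})$-definable collections of $\ideal{I}$-conditions (ranging over all $n$), say $\collection{C}_0, \collection{C}_1, \ldots$, and interleave with this the list of requirements $\Req_e \colon \Phi^G_e \neq A$. We construct a descending sequence of $\ideal{I}$-conditions $(D_0, S_0) \geq (D_1, S_1) \geq \cdots$ and set $G = \bigcup_s D_s$; since each $D_{s+1} \subseteq D_s \cup S_s$ and $S_{s+1} \subseteq S_s$, the set $G$ will satisfy every $(D_s, S_s)$, so a standard bookkeeping argument guarantees $G$ is $\ideal{I}$-generic provided we never fail to act on a genericity requirement when we can.

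The two kinds of steps are as follows. For a genericity requirement associated to $\collection{C}_i$: given the current condition $(D_s, S_s)$, ask (using a sufficiently high oracle, which is irrelevant since we are not controlling the degree of $G$ here) whether some extension of $(D_s, S_s)$ lies in $\collection{C}_i$. If so, pass to such an extension, thereby meeting $\collection{C}_i$; if not, then $(D_s, S_s)$ itself has no extension in $\collection{C}_i$, so $G$ will avoid $\collection{C}_i$ no matter how we continue — in either case keep the condition (or the chosen extension). For a requirement $\Req_e$: starting from $(D_s, S_s)$, search for a finite set $F \subseteq S_s$ and an $x$ such that $\Phi^{D_s \cup F}_e(x) \!\downarrow$ with a use contained in $D_s \cup F$; if such $F, x$ are found, pick the value $v \in \{0,1\} \setminus \{A(x)\}$ that disagrees with $A$ — if $\Phi^{D_s \cup F}_e(x) = 1$ and $A(x) = 1$ we instead want $x \notin G$, so we do not add $F$ but rather shrink $S_s$ to avoid those elements of the use that $F$ would have added; more carefully, we either put the use-portion of $F$ into $D_{s+1}$ (forcing $\Phi^G_e(x) = \Phi^{D_s \cup F}_e(x)$) when that value differs from $A(x)$, or, if the only available computation gives $\Phi^{D_s \cup F}_e(x) = A(x)$, we instead set $D_{s+1} = D_s$ and $S_{s+1} = S_s \setminus (\text{that use})$, which is still infinite and still in $\ideal{I}$, permanently preventing $\Phi^G_e(x) = \Phi^{D_s \cup F}_e(x)$ from being realized by that particular computation. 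If no computation of $\Phi^\sigma_e(x)$ converging along some $\sigma \supseteq D_s$ with $\sigma \setminus D_s \subseteq S_s$ is ever found, then $\Phi^G_e$ is partial, so $\Req_e$ is satisfied automatically.

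The one genuinely delicate point — and the reason Proposition~\ref{P:coneavoidance} needs the hypothesis $A \notin \ideal{I}$ — is the diagonalization step for $\Req_e$ in the case where every convergent computation $\Phi^\sigma_e(x)$ we could force happens to agree with $A(x)$. In that situation the naive "shrink $S_s$" move must be iterated over all $x$, and one must verify that this does not leave us with a finite (or empty) reservoir. The key observation is this: if, from some condition $(D,S)$ with $S \in \ideal{I}$, for every $x$ there is a finite $F \subseteq S$ forcing $\Phi^{D \cup F}_e(x) \!\downarrow\, = A(x)$, and moreover shrinking $S$ to kill the "wrong" computations never exhausts $S$, then one can assemble these computations $\ideal{I}$-computably to get $A \leq_T S \oplus D \in \ideal{I}$, contradicting $A \notin \ideal{I}$. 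So for each $e$ there is always either a partiality outcome, a genuine disagreement we can force, or else an $\ideal{I}$-computation of $A$ that the hypothesis forbids; a careful formulation of this trichotomy, uniform enough to be carried out along the construction, is the main obstacle. Once that combinatorial lemma is in place, the bookkeeping and the verification that $G$ is $\ideal{I}$-generic and computes no copy of $A$ are routine.
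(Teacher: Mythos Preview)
Your overall skeleton is the same as the paper's: build a decreasing sequence of $\ideal{I}$-conditions, alternate between genericity steps and diagonalization steps $\Req_e\colon \Phi^G_e \neq A$, and use $A \notin \ideal{I}$ to handle the bad case. The final trichotomy you state (partiality, a forced disagreement, or an $\ideal{I}$-computation of $A$) is exactly right and is what the paper uses.

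However, your middle paragraph is garbled and describes a procedure that would not work as written. Two specific problems. First, you momentarily conflate $\Phi^G_e(x)$ with $G(x)$: the sentence ``if $\Phi^{D_s \cup F}_e(x) = 1$ and $A(x) = 1$ we instead want $x \notin G$'' is simply wrong---whether $x$ lies in $G$ has nothing to do with the value of $\Phi^G_e(x)$. Second, the ``shrink $S_s$ to remove that use'' move is a red herring. Removing the use of one agreeing computation does not force divergence at $x$; there may be infinitely many other finite $F' \subseteq S_s$ with $\Phi^{D_s \cup F'}_e(x)\!\downarrow = A(x)$, and you cannot iterate the shrinking within a single stage without risking termination and well-definedness issues. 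You recognize this difficulty in your last paragraph, but the fix is not to control the shrinking---it is to drop the shrinking entirely.

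The paper's diagonalization step is much simpler than what you wrote and requires no combinatorial lemma. At stage $s = 2e$ one asks a single question: is there a finite $F \subseteq S_s$ and an $x$ with $\Phi^{D_s \cup F}_e(x)\!\downarrow \neq A(x)$? If yes, put $F$ into $D_{s+1}$ and trim $S_{s+1}$ below the use; done. If no, set $(D_{s+1},S_{s+1}) = (D_s,S_s)$ and do nothing. In this ``no'' case, every convergent computation $\Phi^{D_s \cup F}_e(x)$ with $F \subseteq S_s$ already outputs $A(x)$, so if $\Phi^G_e$ were total one could compute $A$ from $S_s$ alone (search for any convergent $F$ and return its value), contradicting $A \notin \ideal{I}$. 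Hence $\Phi^G_e$ is partial. There is no iteration, no shrinking to kill wrong computations, and no worry about exhausting the reservoir.
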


\begin{proof}
We obtain a generic~$G$ by building a sequence of~$\ideal{I}$-conditions~$(D_0,S_0) \geq (D_1,S_1) \geq \cdots$ with~$\lim_s |D_s| = \infty$, and setting~$G = \bigcup_s D_s$. Let~$(D_0,S_0) = (\emptyset, \omega)$, and assume that for some~$s \geq 0$ we have defined~$(D_s,S_s)$.

If~$s$ is even, we work to make~$G$ not compute~$A$. Say~$s = 2e$. Ask if there exists a finite set~$F \subseteq S_s$ and an~$x$ such that~$\Phi^{D_s \cup F}_e(x) \downarrow \neq A(x)$. If so, choose some such~$F$ and~$x$ and let~$D_{s+1} = D_s \cup F$ and~$S_{s+1} = S_s - \{y : y < \varphi^{D_s \cup F}_e(x)\}$. (Here, $\varphi^{D_s \cup F}_e(x)$ denotes the use of the computation $\Phi^{D_s \cup F}_e(x)$.) Now~$G$ will satisfy~$(D_{s+1},E_{s+1})$, and thus~$\Phi^G_e$ will converge on~$x$ and differ from~$A(x)$, ensuring that~$G$ does not compute~$A$ via~$\Phi_e$. If, on the other hand, no such~$F$ and~$x$ exist, let~$(D_{s+1},S_{s+1}) = (D_s,S_s)$. In this case, it cannot be that~$\Phi^G_e$ is total, as otherwise~$S_s$ would compute~$A$, contradicting that~$A \notin \ideal{I}$.

If~$s$ is odd, we work to make~$G$ be~$\ideal{I}$-generic. Say~$s = 2e + 1$, and let~$\collection{C}$ be the~$e$th member in some fixed listing of all arithmetical collections of~$\ideal{I}$-conditions. Let~$(D_{s+1},S_{s+1})$ be any extension of~$(D_s,S_s)$ in~$\collection{C}$, if such exists, and otherwise let~$(D_{s+1},S_{s+1}) = (D_s,S_s)$. Thus,~$G$ will either meet or avoid~$\collection{C}$, as needed.
\end{proof}

We pause here to present a classical computability-theoretic consequence of the above results. Recall that by Martin's high domination theorem, a set~$A$ is high, i.e., satisfies~$A' \geq_T \emptyset''$, if and only if~$p_A$ dominates every computable function. The left-to-right direction of this result relativizes in the following simple form: for any set~$L$, if~$A' \geq_T L''$ then~$p_A$ dominates every~$L$-computable function. The converse is false, as can be shown directly, but we obtain here the following new simple proof of this fact. (The correct relativization of the right-to-left direction of Martin's result is as follows: for any set~$L$, if~$p_A$ dominates every~$L$-computable function then~$(A \oplus L)' \geq_T L''$.)

\begin{corollary}
There exist sets~$L$ and~$G$ such that~$G$ dominates every~$L$-computable function, but~$G' \ngeq_T L''$.
\end{corollary}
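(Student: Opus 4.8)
The plan is to run Mathias forcing over the ideal $\ideal{I} = \principal{\emptyset'}$, and to take $L$ to be a set that is $1$-generic relative to $\emptyset^{(4)}$. The point of this choice is a bit counterintuitive: rather than building the forcing over an ideal that contains all the (possibly fast-growing) $L$-computable functions — which would make the forcing too complex to control the jump of the generic — we keep the ideal as simple as possible and exploit the slack in the relativization of Martin's theorem. Concretely, $\ideal{I}$ contains the principal function $p_{\emptyset'}$, and I claim this single function already dominates every $L$-computable function. Here one uses the relativization of Martin's high-domination theorem in the form ``$p_A$ dominates every $L$-computable function iff $(A\oplus L)'\geq_T L''$'': taking $A=\emptyset'$, we have $(\emptyset'\oplus L)'\equiv_T \emptyset''\oplus L'$, and since $L$ is in particular $1$-generic (so $L'\equiv_T L\oplus\emptyset'$) this equals $\emptyset''\oplus L\equiv_T L\oplus\emptyset''\equiv_T L''$; thus $(\emptyset'\oplus L)'\geq_T L''$ and $p_{\emptyset'}$ dominates every $L$-computable function.

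Next I would fix $G$ to be a $3$-$\ideal{I}$-generic with $G'\leq_T \ideal{I}^{(3)}$, whose existence is the lift of~\cite[Proposition~2.4]{CDHS-2014} noted above. By Proposition~\ref{P:domination}, $p_G$ dominates every function in $\ideal{I}$, in particular $p_{\emptyset'}$, and hence $p_G$ dominates every $L$-computable function. This settles the domination clause.

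For the clause $G'\ngeq_T L''$ I would simply compare jumps. Since $\ideal{I}=\principal{\emptyset'}$ its parameter is $\emptyset'$, so $\ideal{I}^{(3)}\equiv_T(\emptyset')^{(3)}=\emptyset^{(4)}$ and therefore $G'\leq_T\emptyset^{(4)}$. On the other hand, $L$ being $1$-generic relative to $\emptyset^{(4)}$ gives $L\not\leq_T\emptyset^{(4)}$, and since $L''\equiv_T L\oplus\emptyset''$ (again from $L'\equiv_T L\oplus\emptyset'$) while $\emptyset''\leq_T\emptyset^{(4)}$, we get $L''\not\leq_T\emptyset^{(4)}$. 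Hence $G'\ngeq_T L''$: otherwise $\emptyset^{(4)}\geq_T G'\geq_T L''$, contradicting $L''\not\leq_T\emptyset^{(4)}$.

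The real content — and the step I would expect to be the main obstacle — is not any construction but getting the parameters to line up: one must resist building the forcing over an ideal containing $L$ (which makes $\ideal{I}^{(3)}$ too large to separate from $L''$), and instead notice that a set as weak as $\emptyset'$ can serve as a dominator for the $L$-computable functions precisely because the relativized Martin condition only asks that $(\emptyset'\oplus L)'$, not $(\emptyset')'$, be large. Choosing $L$ generic over $\emptyset^{(4)}$ is then exactly what makes $(\emptyset'\oplus L)'\equiv_T L''$ hold while simultaneously forcing $L''$ above $\emptyset^{(4)}=\ideal{I}^{(3)}$. Once this interplay between the jumps of $L$, $\emptyset'$, and the genericity level is seen, what remains is the routine jump arithmetic above together with the two standard facts that a set $1$-generic relative to $Z$ is not computable from $Z$, and that a $1$-generic set $L$ satisfies $L'\equiv_T L\oplus\emptyset'$.
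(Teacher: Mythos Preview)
Your argument has a genuine gap: the ``iff'' you attribute to the relativized Martin theorem is false in the direction you need. The paper itself is careful to distinguish the two relativizations: the implication
\[
A' \geq_T L'' \;\Longrightarrow\; p_A \text{ dominates every } L\text{-computable function}
\]
holds, and so does
\[
p_A \text{ dominates every } L\text{-computable function} \;\Longrightarrow\; (A\oplus L)' \geq_T L'',
\]
but the hybrid biconditional you state, with $(A\oplus L)'$ on the right, fails in the backward direction. In fact your own choice of $L$ is a counterexample. If $L$ is $1$-generic relative to $\emptyset^{(4)}$ then in particular $L$ is $1$-generic relative to $\emptyset'$, and a set $1$-generic over $Z$ is hyperimmune over $Z$: for each $Z$-computable $f$, the set of Cohen conditions placing the next $1$ beyond $f$ is $Z$-effectively dense, so $p_L$ is not dominated by any $\emptyset'$-computable function, hence not by $p_{\emptyset'}$. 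Thus $(\emptyset'\oplus L)' \equiv_T L''$ holds exactly as you compute, yet $p_{\emptyset'}$ fails to dominate the $L$-computable function $p_L$. Your domination clause therefore does not go through, and the proof collapses at that point.

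The paper's argument proceeds quite differently: it takes $L$ to be a low c.e.\ set, sets $\ideal{I}=\principal{L}$, and uses the cone-avoidance Proposition~\ref{P:coneavoidance} to produce a $3$-$\ideal{I}$-generic $G$ that does not compute a second low c.e.\ set $M\nleq_T L$. Proposition~\ref{P:domination} gives the domination clause directly (since now $\ideal{I}$ genuinely contains all $L$-computable functions), and the failure of $G'\geq_T L''$ is obtained by contradiction: if $G'\geq_T L''\equiv_T\emptyset''\equiv_T M''$, then the \emph{correct} relativized Martin direction (with $G'$, not $(G\oplus M)'$, on the left) would force $p_G$ to dominate the weak modulus of $M$, whence $G\geq_T M$.
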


\begin{proof}
Suppose not. Let~$L$ be any low c.e.\ set, and~$\ideal{I}$ the principal ideal below~$L$. Let~$M$ be any low c.e.\ set not computable from~$L$, and by Proposition~\ref{P:coneavoidance}, let~$G$ be any~$3$-$\ideal{I}$-generic that does not compute~$M$. By the limit lemma, fix a computable function~$\hat{M}$ in two arguments that approximates~$M$ in the limit.

By Proposition~\ref{P:domination},~$G$ dominates every~$L$-computable function, so by assumption,~$G' \geq_T L''$. But as~$L$ and~$M$ are low,~$L'' \equiv_T \emptyset'' \equiv_T M''$, and so~$G' \geq_T M''$. It follows that~$G$ dominates every~$M$-computable function, and so in particular~$G$ dominates the weak modulus of~$\hat{M}$, defined by
\[
w(x) = (\mu s)[\hat{M}(x,s) = M(x)]
\]
for all~$x$. As~$M$ is c.e., this means~$M$ is computable from~$G$, a contradiction.
\end{proof}

One motivation for us in this paper is the question of which ideals satisfy the converse of Proposition~\ref{P:coneavoidance}: that is, for which ideals~$\ideal{I}$ is it the case that for some~$n$, every~$n$-$\ideal{I}$-generic computes every set~$A \in \ideal{I}$?

\begin{definition}
Fix~$n \in \omega$. An ideal~$\ideal{I}$ is \emph{$n$-generically-coded} if every~$n$-$\ideal{I}$-generic set~$G$ computes every~$A \in \ideal{I}$.
\end{definition}

\noindent We can find many natural examples of generically-coded ideals. Recall the following definition: a function~$f$ is called a \emph{modulus} for a set~$A$ if~$A$ is computable from every function~$g$ that majorizes~$f$. By a result of Solovay~\cite[Theorem 2.3]{Solovay-1978}, a set has a modulus if and only if it is hyperarithmetic.

\begin{lemma}[Folklore]\label{L:Delta2modulus}
If~$A$ is any~$\Delta^0_n$ set,~$n \geq 2$, then~$A$ has a~$(\emptyset^{(n-2)} \oplus A)$-computable modulus.
%Every~$\Delta^0_2$ set has a modulus of the same degree.
\end{lemma}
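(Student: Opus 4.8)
The plan is to reduce to the case $n=2$ by relativization and then take the modulus to be a stabilization time of a suitable approximation; the delicate part is bounding the complexity of that stabilization time. Since $A$ is $\Delta^0_n$ we have $A \leq_T \emptyset^{(n-1)} = (\emptyset^{(n-2)})'$, so, writing $Y = \emptyset^{(n-2)}$, it suffices to prove the relativized $n=2$ statement: if $A \leq_T Y'$ then $A$ has a modulus computable from $Y \oplus A$. (For $Y$ computable this is just the assertion that a $\Delta^0_2$ set has an $A$-computable modulus, which is where the work lies; the only extra care needed for $n \geq 3$ is that a majorant of the modulus should compute $A$ on its own rather than only $A \oplus Y$, which one arranges by running the argument with a single computable iterated-limit approximation and having the modulus simultaneously bound all the nested settling times, exactly as for $\emptyset^{(n-1)}$ itself.) So assume $A \leq_T Y'$, and by the relativized limit lemma fix a $Y$-computable sequence $\langle A_s\rangle_{s\in\omega}$ of total $\{0,1\}$-valued functions with $\lim_s A_s(x) = A(x)$ for all $x$. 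Let $f(x)$ be the least $s$ such that $A_t(y) = A(y)$ for every $y \leq x$ and every $t \geq s$. Then $f$ is a modulus for $A$: if $g$ majorizes $f$, then $A(x) = A_{g(x)}(x)$ for every $x$, so $A$ is computable from $g$ (together with $Y$, removable for $n \geq 3$ as above).

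The one substantive point — and the step I expect to be the obstacle — is that $f \leq_T Y \oplus A$. On its face the clause ``$A_t \res (x+1) = A \res (x+1)$ for all $t \geq s$'' is $\Pi^0_1(Y)$, so minimizing $s$ only gives $f \leq_T Y' = \emptyset^{(n-1)}$; the improvement promised by the lemma must be wrung out of the oracle $A$, which supplies the true values $A \res (x+1)$. This is transparent when $A$ is c.e.\ in $Y$ — for instance $A = \emptyset^{(n-1)}$: take $\langle A_s\rangle$ to be the monotone $Y$-enumeration of $A$, so that once $A_s$ agrees with $A$ on an initial segment it never changes there again; then the clause collapses to the decidable ``$A_s(y) = A(y)$ for all $y \leq x$'', the search for $f(x)$ terminates, and $f \leq_T Y \oplus A$. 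For a general $A \leq_T Y'$ no such self-committing approximation need be available, and the stabilization time of an arbitrary approximation need not drop below $\emptyset^{(n-1)}$; the remedy is to choose the approximation — or, what amounts to the same, the reconstruction of $A$ from a given majorant $g$ — more cleverly, so that $Y$ and $A$ together still certify where stabilization has occurred. Showing that such a choice always exists is the heart of the matter.
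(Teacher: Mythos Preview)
Your proposal correctly isolates the obstacle but does not clear it: you define $f(x)$ as the true stabilization time of the approximation, observe that this is only $\Pi^0_1(Y)$ and hence a priori only $Y'$-computable, and then say the remedy is to ``choose the approximation---or the reconstruction of $A$ from a majorant---more cleverly,'' without actually doing so. As written, the argument stops exactly at the point you flag as ``the heart of the matter,'' so this is a genuine gap rather than a complete proof.

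The paper's fix is not to search for a better approximation but to change $f$. With $\hat{A}$ any $\emptyset^{(n-2)}$-computable limit approximation to $A$, set
\[
f(m) \;=\; \text{the least } s \geq m \text{ such that } \hat{A}(x,s) = A(x) \text{ for all } x \leq m.
\]
This is a \emph{single-stage} agreement check (no ``for all $t \geq s$''), so $f$ is immediately $(\emptyset^{(n-2)} \oplus A)$-computable---the $\Pi^0_1$ quantifier you were worried about simply is not there. The price is that $f$ is no longer the settling time, so your straightforward decoding $A(x) = A_{g(x)}(x)$ is no longer valid. Instead, given $g \geq f$ and input $m$, one searches for an $s \geq m$ such that $\hat{A}(m,t)$ is constant for all $t$ in the interval $[s,g(s)]$, and outputs that constant value. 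Such an $s$ exists (any $s$ past the true stabilization works), and for the first such $s$ we have $s \leq f(s) \leq g(s)$, so $\hat{A}(m,s) = \hat{A}(m,f(s))$; but $m \leq s$ forces $\hat{A}(m,f(s)) = A(m)$ by definition of $f$. In short: weaken $f$ to a one-stage check to get the right complexity bound, and push the work into the decoding. That is the missing idea your proposal was reaching for.
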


\begin{proof}
Fix a~$\emptyset^{(n-2)}$-computable function~$\hat{A}$ in two arguments that approximates~$A$ in the limit. Let~$f$ be the~$(\emptyset^{(n-2)} \oplus A)$-computable function mapping each~$n$ to the least~$s \geq n$ such that~$\hat{A}(x,s) = A(x)$ for all~$x \leq n$. Now from any~$g \geq f$ we can compute~$A$ as follows. Given~$n$, we search for an~$s \geq n$ such that~$\hat{A}(n,t) = A(n,s)$ for all~$t$ with~$s \leq t \leq g(s)$. This exists, since for instance any~$s$ after the stage at which~$\hat{A}$ has settled on~$n$ would do. In particular,~$s \leq f(s) \leq g(s)$, so~$\hat{A}(n,s) = \hat{A}(n,f(s))$. But since~$n \leq s$, we have~$\hat{A}(n,f(s)) = A(n)$, so~$\hat{A}(n,s) = A(n)$. We conclude that~$f$ is a modulus for~$A$.
\end{proof}

\begin{proposition}
Let~$\ideal{I}$ be an ideal and~$G$ a~$3$-$\ideal{I}$-generic. If~$\emptyset^{(n-2)} \in \ideal{I}$ for some~$n \geq 2$, then~$G$ computes every~$\Delta^0_n$ set~$A \in \ideal{I}$.
\end{proposition}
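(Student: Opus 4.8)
The plan is to reduce the statement to Proposition~\ref{P:domination} by way of the modulus supplied by Lemma~\ref{L:Delta2modulus}. Fix $n \geq 2$ with $\emptyset^{(n-2)} \in \ideal{I}$, and let $A \in \ideal{I}$ be $\Delta^0_n$. First I would apply Lemma~\ref{L:Delta2modulus} to obtain a $(\emptyset^{(n-2)} \oplus A)$-computable modulus $f$ for $A$, so that $A \leq_T g$ for every function $g$ that majorizes $f$. Since $\emptyset^{(n-2)}$ and $A$ both lie in $\ideal{I}$, and $\ideal{I}$ is closed under $\oplus$ and $\leq_T$, the function $f$ also lies in $\ideal{I}$.

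Next, since $G$ is $3$-$\ideal{I}$-generic it meets, for each $k$, the dense (and arithmetically definable) collection of $\ideal{I}$-conditions $(D,S)$ with $|D| > k$, and is therefore infinite; hence its principal function $p_G$ is total and satisfies $p_G \leq_T G$. By Proposition~\ref{P:domination}, $p_G$ dominates every function in $\ideal{I}$, and in particular $p_G(x) \geq f(x)$ for all sufficiently large $x$, say for all $x \geq N$. The only wrinkle is that domination gives eventual majorization whereas a modulus demands majorization everywhere; I would patch this by hard-coding the finitely many values $f(0),\dots,f(N-1)$ and defining the $G$-computable function $g$ by $g(x) = \max(p_G(x),f(x))$ for $x < N$ and $g(x) = p_G(x)$ for $x \geq N$. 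Then $g \leq_T G$ and $g$ majorizes $f$ everywhere, so $A \leq_T g \leq_T G$ by the defining property of a modulus.

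No step here is a genuine obstacle: the substance is entirely contained in Lemma~\ref{L:Delta2modulus} and Proposition~\ref{P:domination}. The points requiring (only routine) care are the observation that a $3$-$\ideal{I}$-generic is infinite, so that $p_G$ is defined and $G$-computable, and the finite modification turning ``dominates'' into ``majorizes''.
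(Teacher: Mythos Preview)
Your proof is correct and uses the same two ingredients as the paper, Lemma~\ref{L:Delta2modulus} and Proposition~\ref{P:domination}. The only difference is organizational: the paper wraps the argument in an induction on $m$ from $2$ to $n$, at each step deducing that $G$ computes every $\Delta^0_m$ set in $\ideal{I}$ and in particular $\emptyset^{(m-1)}$, which is then available at step $m+1$. The reason is that the \emph{proof} of Lemma~\ref{L:Delta2modulus} as written only shows $A \leq_T g \oplus \emptyset^{(n-2)}$ for $g$ majorizing the constructed function (the recovery procedure uses the $\emptyset^{(n-2)}$-computable approximation $\hat{A}$), so the paper's induction is what guarantees $G \geq_T \emptyset^{(n-2)}$. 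Taking the lemma's statement at face value---and the statement is correct: one can push the same induction into the lemma by combining the constructed function with a modulus for $\emptyset^{(n-2)}$---your direct argument goes through exactly as written. Your side remarks on the infinitude of $G$ and on converting domination into majorization by a finite patch are fine; the paper leaves both implicit.
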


\begin{proof}
Assume that~$G$ computes $\emptyset^{(m-2)}$ for some~$m \leq n$, and suppose~$A \in \ideal{I}$ is~$\Delta^0_m$. By Lemma~\ref{L:Delta2modulus},~$A$ has a~$(\emptyset^{(m-2)} \oplus A)$-computable modulus for~$A$. Since $\emptyset^{(m-2)} \leq_T \emptyset^{(n-2)}$ and $\emptyset^{(n-2)} \in \ideal{I}$, it follows that this modulus belongs to $\ideal{I}$. By Proposition~\ref{P:domination},~$G$ dominates this modulus and so computes~$A$. If $m < n$, this means in particular that $G$ computes $\emptyset^{(m-1)}$, so by induction, $G$ computes $\emptyset^{(n-2)}$. Hence, $G$ computes every~$\Delta^0_n$ set in~$\ideal{I}$, as desired.
\end{proof}

\begin{corollary}
If~$\ideal{I}$ is the ideal generated by a union of principal ideals~$\principal{A}$ such that~$A$ satisfies~$\emptyset^{(n)} \leq_T A \leq_T \emptyset^{(n+1)}$ for some~$n \in \omega$, then~$\ideal{I}$ is~$3$-generically-coded.
\end{corollary}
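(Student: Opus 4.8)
The plan is to reduce everything to the proposition immediately preceding, which already does the substantive work. Write $\ideal{I}$ as the ideal generated by $\bigcup_i \principal{A_i}$, where for each $i$ there is some $n_i \in \omega$ with $\emptyset^{(n_i)} \leq_T A_i \leq_T \emptyset^{(n_i + 1)}$ (allowing $n_i$ to depend on $i$; if one insists on a single fixed $n$ the argument only simplifies). Fix an arbitrary $3$-$\ideal{I}$-generic $G$ and an arbitrary $B \in \ideal{I}$; by the definition of being $3$-generically-coded it suffices to show $B \leq_T G$.

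First I would observe that since $\ideal{I}$ is the ideal \emph{generated} by the family $\{A_i\}$ and $B$ is a single set, $B$ must already lie below some finite join $A_{i_1} \oplus \cdots \oplus A_{i_k}$: the ideal generated by a family of sets is the union, over finite subfamilies, of the principal ideals of their finite joins, and a union of a directed family of ideals is an ideal. Setting $N := \max\{n_{i_1}, \ldots, n_{i_k}\}$, each $A_{i_j} \leq_T \emptyset^{(n_{i_j}+1)} \leq_T \emptyset^{(N+1)}$, hence $B \leq_T \emptyset^{(N+1)}$; in particular $B$ is $\Delta^0_{N+2}$. Next, for an index $i_j$ achieving the maximum we have $\emptyset^{(N)} = \emptyset^{(n_{i_j})} \leq_T A_{i_j} \in \ideal{I}$, and since ideals are closed downward under $\leq_T$, $\emptyset^{(N)} \in \ideal{I}$.

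Now I would apply the preceding proposition with $n := N + 2$ (so $n \geq 2$): its hypothesis $\emptyset^{(n-2)} = \emptyset^{(N)} \in \ideal{I}$ holds by the previous paragraph, so the $3$-$\ideal{I}$-generic $G$ computes every $\Delta^0_{N+2}$ set in $\ideal{I}$, and $B$ is one such set. Hence $B \leq_T G$. As $B$ and $G$ were arbitrary, $\ideal{I}$ is $3$-generically-coded.

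There is no genuine obstacle here; the only point requiring care is the bookkeeping with the two bounds on each generator, and seeing which is used for what. The lower bound $\emptyset^{(n_i)} \leq_T A_i$ is what guarantees that the appropriate jump $\emptyset^{(N)}$ lands in $\ideal{I}$, so that the \emph{hypothesis} of the preceding proposition is met; the upper bound $A_i \leq_T \emptyset^{(n_i+1)}$ is what keeps every member of $\ideal{I}$ at a finite arithmetical level ($\Delta^0_{N+2}$ for the given $B$), so that the \emph{conclusion} of that proposition actually applies to it. Everything else — the modulus construction of Lemma~\ref{L:Delta2modulus} and the domination property of Proposition~\ref{P:domination} — is already packaged inside the cited proposition, so nothing further needs to be reproved.
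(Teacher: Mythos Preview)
Your proof is correct and follows essentially the same approach as the paper: pick an arbitrary element of~$\ideal{I}$, bound it by a finite join of generators, take the maximum of the associated~$n_i$'s to get both~$\emptyset^{(N)} \in \ideal{I}$ and the element~$\leq_T \emptyset^{(N+1)}$, and then invoke the preceding proposition with parameter~$N+2$. The only differences are cosmetic (variable names and your added commentary on which bound serves which purpose).
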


\begin{proof}
Let $A$ be any member of $\ideal{I}$. Then there are sets $A_0,\ldots,A_{k-1}$ such that $A \leq_T A_0 \oplus \cdots \oplus A_{k-1}$, where $\principal{A_0},\ldots,\principal{A}_{k-1}$ are among the principal ideals that generate $\ideal{I}$. Let $n$ be largest such that $\emptyset^{(n)} \leq_T A_i \leq_T \emptyset^{(n+1)}$ for some $i < k$. Then $\emptyset^{(n)} \in \ideal{I}$ and $A \leq_T \emptyset^{(n+1)}$, so by the preceding proposition, we have that every $3$-$\ideal{I}$-generic computes $A$.
\end{proof}

\begin{corollary}\label{C:Delta2coded}
Every~$\Delta^0_2$ ideal is~$3$-generically-coded, as is every principal ideal of the form~$\principal{\emptyset^{(n)}}$,~$n \in \omega$.
\end{corollary}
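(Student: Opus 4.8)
The plan is to derive both assertions from the corollary immediately preceding, which asserts that any ideal generated by a union of principal ideals $\principal{A}$ with $\emptyset^{(n)} \leq_T A \leq_T \emptyset^{(n+1)}$ for some $n \in \omega$ (the witness $n$ being allowed to vary with $A$) is $3$-generically-coded. The second assertion is the easier of the two: $\principal{\emptyset^{(n)}}$ is the principal ideal $\principal{A}$ with $A = \emptyset^{(n)}$, and $\emptyset^{(n)} \leq_T \emptyset^{(n)} \leq_T \emptyset^{(n+1)}$ holds trivially, so the preceding corollary applies verbatim; the degenerate case $n = 0$ is in any event immediate, since $\principal{\emptyset} = COMP$ and every set computes every computable set.

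For the first assertion I would begin by observing that every ideal $\ideal{I}$ coincides with the union of the principal ideals $\principal{B}$ taken over all $B \in \ideal{I}$, and hence is trivially generated by such a union. If $\ideal{I}$ is $\Delta^0_2$ (meaning, in keeping with the terminology fixed earlier for ``arithmetical'', that the chosen exact pair consists of $\Delta^0_2$ sets, and in any case implying that every member of $\ideal{I}$ is a $\Delta^0_2$ set, hence $\leq_T \emptyset'$), then each generating $B$ satisfies $\emptyset^{(0)} = \emptyset \leq_T B \leq_T \emptyset' = \emptyset^{(1)}$. Thus the preceding corollary applies with the witness $n = 0$ for every generator, and $\ideal{I}$ is $3$-generically-coded.

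There is no genuine obstacle here; the only points that merit a moment's thought are that the witness $n$ in the preceding corollary may be chosen separately for each generating set (its proof only takes a maximum over the finitely many generators relevant to a given member of the ideal), and that on whichever precise reading of ``$\Delta^0_2$ ideal'' one adopts, all members are $\leq_T \emptyset'$, which is all that is actually used. Should one prefer to bypass the preceding corollary, the preceding proposition can be invoked directly instead: for a $\Delta^0_2$ ideal, since $\emptyset \in \ideal{I}$ and every member is $\Delta^0_2$, apply it with its ``$n$'' equal to $2$; for $\principal{\emptyset^{(n)}}$ with $n \geq 1$, since $\emptyset^{(n-1)} \in \principal{\emptyset^{(n)}}$ and every member is $\Delta^0_{n+1}$, apply it with its ``$n$'' equal to $n+1$, the case $n = 0$ being handled trivially as above.
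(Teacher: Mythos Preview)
Your proposal is correct and follows exactly the route the paper intends: the corollary is stated without proof, immediately after the corollary about ideals generated by unions of principal ideals $\principal{A}$ with $\emptyset^{(n)} \leq_T A \leq_T \emptyset^{(n+1)}$, and is meant to be read as an instance of it in precisely the way you describe. Your additional remark that one could instead appeal directly to the preceding proposition (with its parameter equal to $2$ in the $\Delta^0_2$ case and $n+1$ in the $\principal{\emptyset^{(n)}}$ case) is also correct and a useful alternative.
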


\noindent The next section is devoted to proving  that the last corollary cannot be extended to general~$\Delta^0_n$ ideals if~$n \geq 3$.

\section{Generics and computation}\label{S:comp}

We address the question of which other ideals are generically-coded. That this is not so for \emph{all} ideals can be seen as follows. Soare~\cite{Soare-1969} exhibited a set~$A$ not computable from any of its co-infinite subsets. Let~$\ideal{I} = \principal{A}$, and let~$G$ be any~$\ideal{I}$-generic satisfying the condition~$(\emptyset,A)$. Then~$G \subseteq A$, and genericity ensures that~$A - G$ is infinite, so~$G$ does not compute~$A$, as desired. Unfortunately, this example leaves a considerable gap between the complexities of ideals that are and are not generically-coded. Indeed, no arithmetical set satisfies Soare's theorem, as the degrees of subsets of infinite arithmetical set are closed upwards (see, e.g., Jockusch~\cite{Jockusch-1973}, Lemma 1.) Hence, the resulting ideal is not arithmetical either.

In what follows, we give an example of a non-generically-coded~$\Delta^0_3$ ideal. We shall employ the following definition.

\begin{definition}
Let~$\ideal{I}$ be an ideal and~$A$ any set. Say a set~$S$ is \emph{$\ideal{I}$-hereditarily uniformly~$A$-computing} if the infinite subsets of~$S$ in~$\ideal{I}$ uniformly compute~$A$, i.e., if there is an~$e \in \omega$ such that~$\Phi^T_e = A$ for every infinite~$T \subseteq S$ in~$\ideal{I}$.
\end{definition}

\noindent The notion is motivated by the concept of uniform introreducibility introduced by Jockusch~\cite[Section 2]{Jockusch-1968a}. Recall that an infinite set is \emph{(uniformly) introreducible} if it is (uniformly) computable from each of its infinite subsets.
%It is not difficult to see that if $A$ has a modulus $f$ then $A$ has an infinite introreducible subset of the same degree as $f$. The converse does not hold, as 
Every degree contains a uniformly  introreducible set, namely, the set of codes of all initial segments of any member of that degree.

%\noindent Recall that a set is \emph{introreducible} if it is computable from each of its infinite subsets, and \emph{uniformly introreducible} if it there is a fixed Turing functional witnessing these computations. It is easy to see that if an infinite set has a modulus of the same degree then it also has an introreducible subset of the same degree.

\begin{proposition}\label{P:comptoinc}
Let~$\ideal{I}$ be an ideal and~$A$ any~$\Delta^0_n(\ideal{I})$ set,~$n \geq 3$. If~$G$ is~$n$-$\ideal{I}$-generic and~$G \geq_T A$ then~$G$ is contained in an~$\ideal{I}$-hereditarily uniformly~$A$-computing set~$S \in \ideal{I}$.
%\begin{enumerate}
%\item~$G \geq_T A$;
%\item~$G$ is contained in an~$\ideal{I}$-hereditarily uniformly~$A$-computing set~$S \in \ideal{I}$.
%\end{enumerate}
\end{proposition}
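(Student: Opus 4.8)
The plan is to build the set $S$ by a forcing construction internal to the Mathias generic $G$, exploiting the fact that $G$ computes $A$ via some fixed functional, say $\Phi^G_i = A$. Since $G$ is $n$-$\ideal{I}$-generic with $n \geq 3$, and $A$ is $\Delta^0_n(\ideal{I})$, the generic decides (meets or avoids) every $\Sigma^0_n(\ideal{I})$-definable collection of conditions. First I would observe that, by the usual forcing analysis, there is a condition $(D,S_0)$ satisfied by $G$ that forces $\Phi^{\dot G}_i$ to be total and equal to $A$ — here we use that forcing totality is a $\Pi^0_2$-over-the-forcing-relation statement, and the forcing relation for $\Sigma^0_n(\ideal{I})$ formulas is itself $\Sigma^0_n(\ideal{I})$, so the relevant collections of conditions are within the genericity budget. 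This is the step that uses $n \geq 3$ rather than just $n \geq 2$, matching the remark after the definition of $n$-$\ideal{I}$-generic.

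Next I would define $S \subseteq S_0$ (with $D \subseteq D \cup S$ containing $G$, so really I want $G \subseteq D \cup S$) by a careful thinning so that every infinite $T \subseteq S$ in $\ideal{I}$ extends $(D, S_0)$ in a way that still forces $\Phi^{\dot G}_i$ total. Concretely: since $(D,S_0)$ forces $\Phi^{\dot G}_i$ total, for every finite $E \subseteq S_0$ there is a finite $F \subseteq S_0$ with $E < F$ (elementwise above) and an extension $(D \cup E \cup F, S_0 \setminus (\text{use}))$ forcing a new value of $\Phi^{\dot G}_i$ to converge; iterating, I can thin $S_0$ to an $\ideal{I}$-set $S$ such that along any infinite subset $T \subseteq S$, the computation $\Phi^{D' \cup T'}_i$ converges on every input for suitable finite initial segments $D'$ of $D \cup (S \cap G)$ — the point being that the uses are bounded by the gaps we built into $S$. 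The key observation making this uniform is that the value forced is always $A(x)$ (since $(D,S_0)$ forced $\Phi^{\dot G}_i = A$), so whatever the infinite subset $T$, the single index $i$ (composed with the fixed way of reading off $D$ and stringing together convergent computations) computes $A$ from $T$. Thinning $S_0$ requires iterating a dense-set argument $\omega$ many times, so $S \leq_T (\text{the parameter computing the forcing relation}) \leq_T \ideal{I}^{(n)}$ or so — in any case $S$ lies in $\ideal{I}$ provided we are slightly careful, but more honestly we should check this: we may need $\ideal{I}$ to contain $\ideal{I}^{(n-?)}$, OR we invoke that the thinning can be done relative to a parameter already available. Let me instead do the thinning so that $S \leq_T A \oplus S_0$, which suffices: at stage $k$ we ask a $\Sigma^0_n(\ideal{I})$ question (does a forcing extension exist) but we are building $S$ not as a generic but as a specific set, and the relevant information — "for each finite $E$, the least $F$ witnessing the next forced value" — is computable from the forcing relation together with $A$; if the forcing relation for this level is computable in $\ideal{I}^{(n)}$ we need $\ideal{I}^{(n)} \in \ideal{I}$, which is not assumed, so the honest route is: observe that since $G \in \ideal{I}$-conditions are being met, and $G$ itself need not be in $\ideal{I}$, but $S$ only needs $S \leq_T A_0$ and $S \leq_T A_1$; we get $S$ by thinning using $A$ and a cofinal sequence of conditions met by $G$, all of which lie in $\ideal{I}$, hence $S \leq_T \ideal{I}$ suffices and $\ideal{I}$ is closed downward — wait, $\ideal{I}$ is not closed under arbitrary joins, but $S \leq_T A_0$ and $S \leq_T A_1$ is what we need and both hold since the construction is $A_0$-computable (resp. $A_1$-computable) given that $A, S_0$, and the conditions are.

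The main obstacle I anticipate is precisely this last point — verifying $S \in \ideal{I}$ — and getting the uniformity of the reduction exactly right so that the \emph{same} index $e$ works for every infinite $T \subseteq S$ in $\ideal{I}$, not just a $T$-dependent one. The trick for uniformity is to make the thinning "self-locating": arrange $S$ so that from any infinite $T \subseteq S$ one can uniformly recover enough of the condition $(D, S_0)$ and the block structure to drive the computation of $A$; since $D$ is finite one can hard-code it, and the block structure can be read off from $T$ because the gaps in $S$ are recognizable. I would set things up so that $\Phi^T_e$ first reconstructs, from an initial segment of $T$, an approximation to $D \cup (\text{some finite piece of } G)$, then dovetails to force successive values, each of which is guaranteed to be $A(x)$; totality and correctness along every such $T$ is exactly what the forcing condition $(D,S_0)$ bought us, and the single index $e$ depends only on $i$, $D$, and the construction, not on $T$. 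Finally I would note $G \subseteq D \cup S$ by construction (we thinned $S_0$ without removing elements of $G$, since $G$ satisfies every condition in our descending sequence), giving the required containment of $G$ in $S$ up to the finite set $D$, which is all the statement needs once we absorb $D$ into $S$ or observe $G$ differs from a subset of $S$ by a finite set and adjust $e$ accordingly.
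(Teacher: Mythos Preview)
Your overall setup is right: fix $e^*$ with $\Phi^G_{e^*}=A$, and use $n$-genericity (with $n\geq 3$ and $A$ being $\Delta^0_n(\ideal{I})$) to find an $\ideal{I}$-condition $(D^*,S^*)$ satisfied by $G$ that forces $\Phi^{\dot G}_{e^*}$ to be total and to never disagree with $A$. But from this point on you take an unnecessary detour that creates a real gap.

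The gap is your thinning step and the resulting membership problem for $S$. You propose to thin $S_0$ to a set $S$ using $A$ (to check that forced values match $A$, or to drive the block structure), and then argue $S\in\ideal{I}$. But $A$ is only assumed to be $\Delta^0_n(\ideal{I})$, not a member of $\ideal{I}$; your claims that ``$S\leq_T A\oplus S_0$ suffices'' or that the construction is $A_0$- and $A_1$-computable both tacitly assume $A\in\ideal{I}$. In fact, $A\in\ideal{I}$ is a \emph{consequence} of this proposition (it is exactly how Corollary~\ref{C:minpair} is derived), so assuming it here would be circular. Your attempts to sidestep this (``$S\leq_T\ideal{I}$ suffices'', ``$S\leq_T A_0$ and $S\leq_T A_1$'') never land because the thinning genuinely needs access to $A$.

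The missed idea is that no thinning is needed at all. Once $(D^*,S^*)$ forces totality and avoids the $\Sigma^0_n(\ideal{I})$ collection $\{(D,S):\exists x\,\exists s\,[\Phi^D_{e^*,s}(x)\downarrow\neq A(x)]\}$, simply take $S=D^*\cup S^*$, which is trivially in $\ideal{I}$. The uniform reduction $\Phi_e$ is: on input $x$, search the oracle for any finite $F$ with $\min F\geq\min S^*$ and $\Phi^{D^*\cup F}_{e^*}(x)\downarrow$, and output that value. For any infinite $T\subseteq S$ in $\ideal{I}$, such an $F\subseteq T$ must exist (otherwise $(D^*,T\cap S^*)$ would be an $\ideal{I}$-extension forcing divergence, contradicting the totality forcing), and the value must be $A(x)$ (otherwise an extension lands in the avoided collection). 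This gives the uniformity you were worried about for free, with $D^*$ hard-coded into $e$, and completely sidesteps the $S\in\ideal{I}$ issue.
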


\begin{proof}
Fix~$e^* \in \omega$ such that~$\Phi^G_{e^*} = A$. Since $n \geq 3$, $G$ must satisfy some condition $(D^{**},S^{**})$ forcing  $\Phi^{\dot{G}}_{e^*}$ to be total, which means that $(D^{**},S^{**})$ avoids the  set  of conditions $(D,S)$ such that
\[
\exists x~\forall~\textnormal{finite sets}~D'~\forall s~[D\subseteq D' \subseteq D\cup S \rightarrow  \Phi_{e^*, s}^{D'}(x)\uparrow].
\]
Let $\collection{C}$ be the collection of all conditions $(D,S)$ such that
\[
\exists x~\exists s~[\Phi^D_{e^*,s}(x) \downarrow \neq A(x)],
\]
which is $\Sigma^0_n(\ideal{I})$-definable since~$A$ is~$\Delta^0_n(\ideal{I})$. By genericity, $G$ must consequently avoid this collection, say via a condition $(D^*,S^*)$. Without loss of generality, we may assume $(D^*,S^*) \leq (D^{**},S^{**})$. Let~$S = D^* \cup S^*$, and let~$e \in \omega$ be the index of the functional that, on input~$x$, searches its oracle for the least finite subset~$F$ such that~$\min F \geq \min S^*$ and
\[
\Phi^{D^* \cup F}_{e^*}(x) \downarrow~ = v
\]
for some value~$v \in \{0,1\}$, and then returns this value. We claim that~$S$ is~$\ideal{I}$-hereditarily uniformly~$A$-computing, as witnessed by~$e$. Since~$G \subseteq S$ and~$S \in \ideal{I}$, this gives the desired conclusion.

To prove the claim, fix any infinite~$T \subseteq S$ in~$\ideal{I}$. If for some~$x$ there were no finite subset~$F$ of~$T$ with~$\min F \geq \min S^*$ and~$\Phi^{D^* \cup F}_{e^*}(x) \downarrow$, then~$(D^*,T \cap S^*)$ would be an~$\ideal{I}$-extension of~$(D^*,S^*)$ (and hence of $(D^{**},S^{**})$) forcing~$\Phi_{e^*}^{\dot{G}}(x) \uparrow$, which cannot be. Hence,~$\Phi^T_e$ is total, by definition of~$e$. Similarly, if for some~$x$ there were such an~$F$ with~$\Phi^{D^* \cup F}_{e^*}(x) \downarrow \neq A(x)$ then~$(D^* \cup F, T^*)$, where~$T^*$ is the set of elements of~$T \cap S^*$ larger than~$\max F$ and the use of~$\Phi^{D^* \cup F}_{e^*}(x)$, would be an extension of $(D^*,S^*)$ in $\collection{C}$. Hence, we also have~$\Phi^T_e = A$.
\end{proof}

The following corollary extends Proposition 2.8 of~\cite{CDHS-2014}, where it appears for the special case of~$\ideal{I} = COMP$. (Compare also with Proposition~\ref{P:coneavoidance}.)

\begin{corollary}\label{C:minpair}
Let~$\ideal{I}$ be an ideal and~$G$ an~$n$-$\ideal{I}$-generic,~$n \geq 3$. Then every~$\Delta^0_n(\ideal{I})$ set computable from~$G$ belongs to~$\ideal{I}$.
\end{corollary}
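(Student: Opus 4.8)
The plan is to derive this directly from Proposition~\ref{P:comptoinc}. Suppose $A$ is $\Delta^0_n(\ideal{I})$ and $A \leq_T G$ for some $n$-$\ideal{I}$-generic $G$, where $n \geq 3$. By Proposition~\ref{P:comptoinc}, there is an $\ideal{I}$-hereditarily uniformly $A$-computing set $S \in \ideal{I}$ with $G \subseteq S$; fix the index $e$ witnessing this, so that $\Phi^T_e = A$ for every infinite $T \subseteq S$ with $T \in \ideal{I}$. The goal is then simply to produce one such $T$, since any such $T$ lies in $\ideal{I}$ and computes $A$ (uniformly via $e$), whence $A \in \ideal{I}$ by closure of $\ideal{I}$ under $\leq_T$.

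The obvious candidate for $T$ would be $G$ itself, but $G$ need not lie in $\ideal{I}$ (indeed, typically it does not), so one cannot take $T = G$. Instead I would observe that $S$ is an infinite member of $\ideal{I}$, so we may simply take $T = S$: then $T \subseteq S$, $T$ is infinite, and $T \in \ideal{I}$, so $\Phi^S_e = A$, and $A \leq_T S \in \ideal{I}$ gives $A \in \ideal{I}$. (If one prefers, one can instead note that $S$ has infinitely many infinite $\ideal{I}$-subsets by the usual thinning argument used in Proposition~\ref{P:domination}, but this is unnecessary here.)

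I do not expect any real obstacle: the entire content is packaged in Proposition~\ref{P:comptoinc}, and the corollary is just the remark that the set $S$ produced there is itself an infinite subset of $S$ belonging to $\ideal{I}$, hence witnesses $A \in \ideal{I}$. The only point requiring a moment's care is the hypothesis $n \geq 3$, which is needed precisely so that Proposition~\ref{P:comptoinc} applies (it is what guarantees $G$ forces $\Phi^{\dot G}_{e^*}$ total and that the relevant collection of conditions is $\Sigma^0_n(\ideal{I})$-definable). One should also mention the comparison flagged in the statement: whereas Proposition~\ref{P:coneavoidance} shows that a single set outside $\ideal{I}$ can be avoided by \emph{some} generic, this corollary shows that \emph{every} sufficiently generic $G$ automatically avoids every $\Delta^0_n(\ideal{I})$ set not in $\ideal{I}$, which is the $\Delta^0_n(\ideal{I})$ analogue of the minimal-pair-type behaviour from~\cite{CDHS-2014}.
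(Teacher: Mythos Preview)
Your proposal is correct and follows essentially the same approach as the paper: apply Proposition~\ref{P:comptoinc} to obtain an $\ideal{I}$-hereditarily uniformly $A$-computing set $S \in \ideal{I}$, then observe that $S$ itself is an infinite subset of $S$ in $\ideal{I}$, so $A \leq_T S$ and hence $A \in \ideal{I}$. The paper's proof is just a one-line version of exactly this argument.
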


\begin{proof}
By the preceding proposition, if~$A \leq_T G$ is~$\Delta^0_n$ then in particular~$\ideal{I}$ contains an~$\ideal{I}$-hereditarily uniformly~$A$-computing set, so~$A \in \ideal{I}$.
\end{proof}

We now have the following somewhat technical lemma. For simplicity of notation, we shall assume here that all computations are~$\{0,1\}$-valued, and that all domains of convergence are closed downwards. Given a condition~$(D,S)$, a finite set~$F$, and~$e \in \omega$, we shall write~$F \subseteq \Phi^{(D,S)}_e$ to mean that~$\Phi^D_e$ converges on~$\omega \res \max F + 1$ with use bounded by~$\min S$, and that~$F$ is contained in the finite set given by this computation. So, if~$F \subseteq \Phi^{(D,S)}_e$ then ~$F \subseteq \Phi^{(\widetilde{D},\widetilde{S})}_e$ for every $(\widetilde{D},\widetilde{S}) \leq (D,S)$, and also~$F \subseteq \Phi^U_e$ for any~$U$ satisfying~$(D,S)$ such that~$\Phi^U_e$ is total.

\begin{lemma}\label{L:density}
Let~$\ideal{I}$ be an ideal, $(D^*,S^*)$ an $\ideal{I}$-condition, and $F^*$ a finite set such that~$F^* \subseteq \Phi^{(D^*,S^*)}_e$ for some $e \in \omega$. Fix $i \in \omega$ and let $\collection{C}$ be the collection of conditions~$(D,S)$ satisfying one of the following properties:
\begin{enumerate}
\item $(D,S)$ forces that $\Phi^{\dot{G}}_e$ is not total, not infinite, or not contained in $\dot{G}$;
\item there is an~$x$ such that~$\Phi^{F^* \cup F}_i(x) \uparrow$ for all finite~$F \subseteq S$;
\item there is an~$x \leq \max D$ and a finite~$F \subseteq \Phi^{(D,S)}_e$ with~$\max F^* < \min F$ such that~$\Phi^{F^* \cup F}_i(x) \downarrow \neq D(x)$.
\end{enumerate}
Then $\collection{C}$ is dense below $(D^*,S^*)$. Moreover, if $\ideal{I} = COMP$, then an index for an extension in $\collection{C}$ of a given $(D,S) \leq (D^*,S^*)$, along with which of the above alternatives it satisfies, can be determined uniformly $\emptyset''$-computably from an index for $(D,S)$.
\end{lemma}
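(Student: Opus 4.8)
The plan is to prove density by analyzing a fixed condition $(D,S) \le (D^*,S^*)$ and showing it has an extension satisfying (1), (2), or (3). The key dichotomy to set up first concerns whether $\Phi^{\dot{G}}_e$ can be forced total, infinite, and contained in $\dot{G}$: if not, then some extension of $(D,S)$ falls under alternative (1) and we are done. So assume $(D,S)$ does force $\Phi^{\dot{G}}_e$ to be total, infinite, and a subset of $\dot{G}$; in particular, by the convention on $F \subseteq \Phi^{(D,S)}_e$, we may (after shrinking $S$) arrange that $\Phi^D_e$ converges on a long initial segment with use below $\min S$ and that this computation puts elements into $D$ — more precisely, we can find extensions $(D',S') \le (D,S)$ in which $D'$ is as large as we like and $D' \supseteq (\text{the finite set given by }\Phi^{D'}_e)$ up to its current length. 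This is the device by which we get to choose, below $(D,S)$, a condition whose $D$-part contains new elements of $\Phi_e$ beyond $\max F^*$.

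**Next I would** run the following search, which is the heart of the argument. Using these extensions, ask whether there is an extension $(D',S') \le (D,S)$ and a finite $F \subseteq \Phi^{(D',S')}_e$ with $\max F^* < \min F$ and an $x \le \max D'$ such that $\Phi^{F^* \cup F}_i(x) \downarrow \neq D'(x)$. If yes, that $(D',S')$ witnesses alternative (3) and we are done. If no such extension exists, I claim some extension satisfies (2): suppose not, so that for every extension $(D',S')$ and every $x$ there is a finite $F \subseteq S'$ with $\Phi^{F^* \cup F}_i(x)\downarrow$. The point is that, because alternative (3) always fails, whenever $F \subseteq \Phi^{(D',S')}_e$ with $\max F^* < \min F$ we must have $\Phi^{F^* \cup F}_i(x) = D'(x)$ for all $x \le \max D'$; letting $D'$ grow through the infinitely many elements that $\Phi_e$ is forced to enumerate into $\dot{G}$, and using that these are cofinal, one derives that $\Phi^{F^* \cup T}_i$ as $T$ ranges over (sufficiently long) initial segments of the "$\Phi_e$-image" is a consistent total function agreeing with $\dot G$ — but then $\dot G$ itself is forced to be computable from $F^* \cup (\text{the image of }\Phi_e)$, hence from $\dot G$, which is no contradiction on its own, so the actual contradiction must instead be extracted against a specific requirement. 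I would therefore route the "no" case as follows: if no extension satisfies (2), then in particular $(D,S)$ itself does not, so fixing for each $x$ a finite $F_x \subseteq S$ with $\Phi^{F^* \cup F_x}_i(x)\downarrow$, and then (since (3) fails for all extensions) the value equals $D'(x)$ for every extension making $x \le \max D'$; letting $D'$ realize each such $x$ via the forced-$\Phi_e$-elements shows $\Phi^{F^* \cup \bigcup_x F_x}_i$ totally computes $\dot G$ on an infinite coinfinite-style argument — here one finally uses Proposition~\ref{P:coneavoidance}-style reasoning or the hypothesis $F^* \subseteq \Phi^{(D^*,S^*)}_e$ to see the $F_x$ can be taken inside the $\Phi_e$-image, contradicting that $\dot G$ is generic. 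I expect I will actually organize this more cleanly by first handling (1), then observing that under the negations of (1) and (2) the failure of (3) at every extension forces $\Phi_i(F^* \cup \cdot)$ to define $\dot G$ on a set that $\dot G$ must make infinite, and deriving a contradiction from that.

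**For the "moreover" clause**, when $\ideal{I} = COMP$ every $\ideal{I}$-condition has a computable $S$-part with a canonical index, so the forcing relation "$\Phi^{\dot G}_e$ is not total / not infinite / not contained in $\dot G$" is a statement of bounded complexity in $\emptyset''$ (this is where the $n \ge 3$ / $\Sigma^0_3$ bookkeeping from Section~\ref{S:defns} enters, via the standard fact that $\emptyset''$ decides $\Sigma^0_2$ forcing questions about totality and boundedness over computable conditions). The searches in my argument — for a witnessing extension $(D',S')$ in (3), or a witness $x$ in (2), or a falsifying extension for (1) — are all $\Sigma^0_2$-in-$\emptyset$-or-$\emptyset'$ searches over indices of computable sets with $\emptyset''$-decidable membership conditions, so $\emptyset''$ can carry them out and output both the index of the extension and which alternative was achieved. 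I would state this as: each of (1), (2), (3) is uniformly $\emptyset''$-decidable given an index for $(D,S)$, and the proof above locates an extension by a $\emptyset''$-effective case analysis, so the whole procedure is $\emptyset''$-computable uniformly in the index of $(D,S)$.

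**The main obstacle** will be the case where alternatives (1) and (3) both fail at every extension: I must squeeze out alternative (2) rather than merely concluding something consistent about $\dot G$. The delicate point is to use the failure of (3) at \emph{all} extensions (not just at $(D,S)$) together with the fact that the elements $\Phi_e$ is forced to enumerate are cofinal in $\dot G$, to show that if no extension forced alternative (2) we could compute $\dot G$ from $F^*$ and a tail of its own $\Phi_e$-image — and then to convert "computable from a tail of its own image" into an actual contradiction. Getting the quantifier order right here (choosing $F$'s inside the $\Phi_e$-image before vs.\ after seeing $x$, and ensuring $\min F > \max F^*$ throughout so that the hypothesis $F^* \subseteq \Phi^{(D^*,S^*)}_e$ is respected) is the part that needs the most care; everything else is bookkeeping with the extension relation and the $F \subseteq \Phi^{(D,S)}_e$ convention introduced just before the lemma.
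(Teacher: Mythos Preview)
Your plan has a genuine gap in the central case: you never articulate a contradiction from ``(2) and (3) fail at every extension below $(D,S)$,'' and in fact there is none to be had along the line you sketch. The failure of alternative (3) at an extension $(D',S')$ only says that $\Phi^{F^*\cup F}_i(x)=D'(x)$ whenever both sides are defined and $F\subseteq\Phi^{(D',S')}_e$; but $D'(x)$ is not a fixed value---for $x>\max D$ with $x\in S$, different extensions can make $D'(x)=0$ or $D'(x)=1$. So ``$\Phi^{F^*\cup F}_i$ agrees with $D'$ for every extension'' places no constraint at all, and the conclusion that some single function computes $\dot G$ does not follow. You acknowledge as much (``which is no contradiction on its own''), but never repair it.

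The paper's proof supplies exactly the missing idea and avoids the contradiction route entirely. Assuming $(D,S)$ and all its extensions force $\Phi^{\dot G}_e$ total, infinite, and contained in $\dot G$, one \emph{constructs} a chain $S\supseteq S_0\supseteq S_1\supseteq S_2\supseteq S_3$ of infinite sets in $\ideal{I}$ (each obtained by iterating $\Phi_e$ on $D\cup\,\cdot\,$) together with a specific point $x^*>\max D$ such that $x^*\in S_0$ but $x^*\notin S_2$; thus $S_{2v}(x^*)=1-v$ for $v\in\{0,1\}$. Now one asks only about $x^*$: if no finite $F\subseteq S_3$ makes $\Phi^{F^*\cup F}_i(x^*)$ converge, $(D,S_3)$ satisfies (2); if some $F\subseteq S_3$ gives $\Phi^{F^*\cup F}_i(x^*)\downarrow=v$, then because $F\subseteq S_3\subseteq S_{2v}$ one can take $\widetilde D$ a long enough initial segment of $D\cup S_{2v}$ to get $F\subseteq\Phi^{(\widetilde D,\widetilde S)}_e$ and $\widetilde D(x^*)=S_{2v}(x^*)=1-v\neq v$, which is exactly (3). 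This two-environment trick---prearranging two compatible reservoirs that disagree at a single test point---is the idea your argument lacks. It also makes the ``moreover'' clause immediate: the sets $S_0,\ldots,S_3$ are uniformly computable from $S$, and the single case split at $x^*$ is $\Pi^0_1$, so $\emptyset''$ suffices; by contrast, your search over \emph{all} extensions $(D',S')$ for a witness to (3) quantifies over indices of infinite computable subsets of $S$, which is $\Sigma^0_3$ and not $\emptyset''$-decidable as you claim.
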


\begin{proof}
Call a condition $(D,S)$ \emph{good} if the following facts are true:
\begin{itemize}
\item for all finite $F \subseteq S$ and all $x \in \omega$, if $\Phi^{D \cup F}_e(x) \downarrow~ = 1$ then $x \in D \cup F$;
\item for each $s \in \omega$, there is a finite $F \subseteq S$ and an $x > s$ with $\Phi^{D \cup F}_e(x) \downarrow~ = 1$.
\end{itemize}
Note that if $(D,S)$ is good, then the $F$ and $x$ in the second item above can be found uniformly $S$-computably, and $F$ may be chosen to be non-empty. By iterating this construction, we can thus~$S$-computably construct an infinite subset~$T$ of~$S$ such that~$\Phi_e^{D \cup T}$ is infinite and contained in~$D \cup T$. We denote this $S$-computable subset~$T$ of~$S$ by~$S_D$.

If $(D,S)$ is not good, we can define an extension of it satisfying alternative 1 in the definition of $\collection{C}$, as follows. If there is a finite set $F \subseteq S$ such that $\Phi^{D \cup F}_e(x) \downarrow~ = 1$ for some $x \notin D \cup F$, let $\widetilde{D} = D \cup F$, and let $\widetilde{S}$ consist of the elements of $S$ larger than $x$, the maximum of $D \cup F$, and the use of the computation $\Phi^{D \cup F}_e(x)$. Then $(\widetilde{D},\widetilde{S})$ extends $(D,S)$ and forces that $\Phi^{\dot{G}}_e$ is not contained in $\dot{G}$. If, instead, there is an $s$ such that either $\Phi^{D \cup F}_e(x) \uparrow$ or $\Phi^{D \cup F}_e(x) \downarrow~ = 0$ for all finite sets $F \subseteq S$ and $x > s$, then $(D,S)$ itself forces that $\Phi^{\dot{G}}$ is not total or not infinite. In this case, let $(\widetilde{D},\widetilde{S}) = (D,S)$.

Now to prove the density of $\collection{C}$, fix any condition $(D,S) \leq (D^*,S^*)$. We wish to exhibit an extension of $(D,S)$ in $\collection{C}$, and so, by the preceding observation, we may assume $(D,S)$ and all its extensions are good. (This is equivalent to saying that $(D,S)$ forces that $\Phi^{\dot{G}}_e$ is total, infinite, and contained in $\dot{G}$.)

Since all extensions of $(D,S)$ are good, we can define the following sequence of infinite subsets of~$S$:
\begin{itemize}
\item~$S_0 = S_D$;
\item~$S_1 = \Phi^{D \cup S_0}_e - \{ x^* \}$, where $x^*$ is the least $x > \max D$ in $\Phi^{D \cup S_0}_e$;
\item~$S_2 = (S_1)_D$;
\item~$S_3 = \Phi^{D \cup S_2}_e$.
\end{itemize}
Thus, we have
\[
S \supseteq S_0 \supseteq S_1 \supseteq S_2 \supseteq S_3,
\]
and the number~$x^*$ belongs to~$S_0$ (since $\Phi^{D \cup S_0}_e \subseteq S_0$) but not to~$S_2$. In other words, for~$v \in \{0,1\}$, we have~$S_{2v}(x^*) = 1- v$. There are now two cases.

\medskip
\noindent \emph{Case 1. There is no finite~$F \subseteq S_3$ such that~$\Phi^{F^* \cup F}_i(x^*) \downarrow$.} Then~$(D,S_3)$ is an extension of~$(D,S)$ satisfying alternative 2 in the definition of~$\collection{C}$, witnessed by~$x^*$.

\medskip
\noindent \emph{Case 2. Otherwise.} In this case, choose~$F \subseteq S_3$ and~$v \in \{0,1\}$ such that~$\Phi^{F^* \cup F}_i(x^*) \downarrow~ = v$. Thus,
\[
\Phi^{F^* \cup F}_i(x^*) \downarrow~ = v \neq S_{2v}(x^*) = (D \cup S_{2v})(x^*),
\]
with the last equality holding because $x^*$ is bigger than $\max D$.
%Since $(D,S_{2v}) \leq (D,S) \leq (D^*,S^*)$, we have that $F^* \subseteq \Phi^{(D, S_{2v})}_e$.
Since $(D,S_{2v})$ is good, there is an initial segment $\widetilde{D}$ of~$D \cup S_{2v}$ long enough so that
\begin{itemize}
\item~$x^* \leq \max \widetilde{D}$ and $\widetilde{D}(x^*) = S_{2v}(x^*)$, and
\item~$\Phi^{\widetilde{D}}_e$ converges on~$\omega \res \max F + 1$.
\end{itemize}
Let~$\widetilde{S}$ consist of the elements of~$S_{2v}$ larger than~$\max \widetilde{D}$ and the use of~$\Phi^{\widetilde{D}}_e$ on~$\omega \res \max F + 1$. Thus, $F \subseteq \Phi^{(\widetilde{D},\widetilde{S})}_e$. Finally, since $(D,S)$ extends $(D^*,S^*)$ and is good, we have
\[
F^* \subseteq \Phi^{(D,S)}_e \res \max F^* + 1 \subseteq D,
\]
so~$\max F^* < \min F$ since~$F \subseteq S_3 \subseteq S$. We conclude that $x^*$ and $F$ witness that $(\widetilde{D},\widetilde{S})$ satisfies alternative 3 in the definition of $\collection{C}$.

To complete the proof, suppose $\ideal{I} = COMP$. Now $\emptyset''$ can determine whether or not a condition is good, and can differentiate between Cases 1 and 2. Thus, the only step in the argument that cannot be performed by $\emptyset''$ is the determination of whether or not all extensions of $(D,S)$ are good, which we just assumed above. We can modify the proof to fix this difficulty as follows. Initially, we ask if $(D,S)$ is good, and if not, we can define an appropriate extension of $(D,S)$ that meets alternative 1 in the definition of $\collection{C}$), and are done. Otherwise, we can define $S_0$ and $S_1$ as before. Now we ask if $(D,S_1)$ is good, and if not, we can argue as before, and are again done. Otherwise, we can define $S_2$ and $S_3$, and the rest of the argument is unchanged. It is easy to see that the proof is thus uniform in $\emptyset''$.
\end{proof}

\begin{theorem}\label{T:idealnoncomp}
Let~$\ideal{I}$ be an ideal and~$G$ a~$3$-$\ideal{I}$-generic. Then no~$3$-$\principal{G}$-generic set contained in~$G$ computes~$G$.
\end{theorem}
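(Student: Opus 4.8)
The plan is to argue by contradiction. Suppose $G$ is $3$-$\ideal{I}$-generic, $H$ is $3$-$\principal{G}$-generic with $H \subseteq G$, and $\Phi^H_i = G$ for some $i \in \omega$; I will contradict the $3$-$\ideal{I}$-genericity of $G$. The argument has two stages. In the first I extract from the hypothesis an infinite $G$-computable set $W \subseteq G$ that is $\principal{G}$-hereditarily uniformly $G$-computing (in the sense of the definition preceding Proposition~\ref{P:comptoinc}, taking the ambient ideal to be $\principal{G}$ and $A = G$). In the second I rule this out using Lemma~\ref{L:density} inside the forcing for $G$ over $\ideal{I}$. One cannot simply quote Proposition~\ref{P:comptoinc} with $\ideal{I} := \principal{G}$ and $A := G$, since the only reductions witnessing $G \geq_T G$ are trivial and render that proposition vacuous here; the point is that passing through the generic $H$ supplies a genuine reduction of $G$ to sparse subsets of $G$.

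\emph{First stage.} Since $\Phi^H_i = G$ is total and correct, $H$ avoids the $\Sigma^0_3(\principal{G})$-definable collection of $\principal{G}$-conditions forcing $\Phi^{\dot H}_i$ not total, and also the ($\Sigma^0_3(\principal{G})$-definable) collection of conditions $(E,W)$ with $\Phi^E_i(x)\downarrow \neq G(x)$ for some $x$. Taking a common extension, I obtain a $\principal{G}$-condition $(E^*,W^*)$, so $W^* \leq_T G$, that forces $\Phi^{\dot H}_i$ total and has no extension $(E,W)$ with $\Phi^E_i(x)\downarrow \neq G(x)$ for any $x$. Replacing $W^*$ by $W^* \cap G$ — which is still infinite, since $H \setminus E^* \subseteq W^* \cap G$, and still $\leq_T G$, and clearly preserves both properties, as any extension of $(E^*, W^* \cap G)$ extends $(E^*,W^*)$ — I may assume $W := W^* \subseteq G$. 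Let $e_0$ be an index for the functional that, on input $x$, searches its oracle for a finite $F$ with $\min F > \max E^*$ and $\Phi^{E^* \cup F}_i(x)\downarrow$ and returns that value. For any infinite $G$-computable $T \subseteq W$, the pair $(E^*,T)$ is a $\principal{G}$-condition below $(E^*,W^*)$; forcing totality gives, for each $x$, a finite $F \subseteq T$ with $\Phi^{E^* \cup F}_i(x)\downarrow$, and the no-wrong-value property (applied with $F \subseteq W^*$) gives $\Phi^{E^* \cup F}_i(x) = G(x)$. Hence $\Phi^T_{e_0} = G$. So $W$ is an infinite $G$-computable subset of $G$ with $\Phi^T_{e_0} = G$ for every infinite $G$-computable $T \subseteq W$.

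\emph{Second stage.} Fix an index $e$ with $\Phi^G_e = W$. As $W$ is total, infinite, and contained in $G$, genericity of $G$ over $\ideal{I}$ yields a condition $(D^*,S^*)$ in the filter of $G$ forcing $\Phi^{\dot G}_e$ total, infinite, and contained in $\dot G$; extending within the filter so that $\Phi^{D^*}_e$ has converged far enough, I also fix a nonempty finite $F^* \subseteq \Phi^{(D^*,S^*)}_e$. Apply Lemma~\ref{L:density} with this $(D^*,S^*)$, $F^*$, $e$ and $i := e_0$: the resulting collection $\collection{C}$ is $\Sigma^0_3(\ideal{I})$-definable and dense below $(D^*,S^*)$, so $G$ meets it via some $(D,S) \leq (D^*,S^*)$ in its filter. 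Alternative~(1) is impossible, since by monotonicity $(D,S)$ inherits from $(D^*,S^*)$ the forcing that $\Phi^{\dot G}_e$ is total, infinite and contained in $\dot G$, so it cannot force any of the negations. If $(D,S)$ satisfies alternative~(2) for some $x$, set $T := W \cap (\max D,\infty)$; this is an infinite $G$-computable subset of $W$, and $T \subseteq S$ because $G \subseteq D \cup S$ with $\max D < \min S$ forces $G \cap (\max D,\infty) \subseteq S$. Since $\Phi^{F^* \cup F}_{e_0}(x)\uparrow$ for all finite $F \subseteq S$ and the search functional is monotone (any halting computation $\Phi^T_{e_0}(x)$ uses a finite $F_0 \subseteq T \subseteq S$, which then makes $\Phi^{F^* \cup F_0}_{e_0}(x)\downarrow$), we get $\Phi^T_{e_0}(x)\uparrow$, contradicting $\Phi^T_{e_0} = G$. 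If $(D,S)$ satisfies alternative~(3) via some $x \leq \max D$ and finite $F \subseteq \Phi^{(D,S)}_e$ with $\Phi^{F^* \cup F}_{e_0}(x)\downarrow \neq D(x) = G(x)$, then $F^* \cup F \subseteq W$ (because $(D^*,S^*)$ forces $\Phi^{\dot G}_e \subseteq \dot G$ and $\max D < \min S$ pins down $G$, hence $\Phi^G_e = W$, below $\min S$), and $T := (F^* \cup F) \cup \bigl(W \cap (\max F,\infty)\bigr)$ is an infinite $G$-computable subset of $W$ with $T \cap [0,\max F] = F^* \cup F$; so the search $\Phi^T_{e_0}(x)$ finds the same least witness as $\Phi^{F^* \cup F}_{e_0}(x)$, giving $\Phi^T_{e_0}(x) \neq G(x)$ and again contradicting $\Phi^T_{e_0} = G$.

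The main obstacle, as usual in such self-referential forcing arguments, is the bookkeeping in the second stage: one must fix the oracle conventions for ``$\Phi^{E^* \cup F}_i(x)\downarrow$'' carefully enough (in the spirit of the remark preceding Lemma~\ref{L:density}) that $\Phi_{e_0}$ is honest and monotone; one must check that the witnessing finite sets $F^*$ and $F$ genuinely lie in $W = \Phi^G_e$; and — the delicate point — one must arrange each $T$ to be \emph{$G$-computable} (hence the passage to $W \cap (\max D,\infty)$ rather than $W \cap S$, since $S \in \ideal{I}$ need not be $G$-computable) while still agreeing with the partial oracle $F^* \cup F$ below $\max F$, so that $\Phi^T_{e_0}(x)$ reproduces $\Phi^{F^* \cup F}_{e_0}(x)$. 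A subsidiary verification is that all the collections of conditions decided along the way are $\Sigma^0_3$ over the relevant ideal, so that $3$-genericity suffices; here it matters that $\principal{G}$ has exact pair $(G,G)$, so that quantification over $\principal{G}$-conditions stays at the $\Pi^0_2(G)$ level.
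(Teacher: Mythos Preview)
Your proof is correct and follows essentially the same strategy as the paper's: both arguments reduce the problem, via Lemma~\ref{L:density} and the $3$-$\ideal{I}$-genericity of $G$, to showing that no infinite $G$-computable subset of $G$ can be $\principal{G}$-hereditarily uniformly $G$-computing.

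A few remarks on the packaging. Your claim that one cannot quote Proposition~\ref{P:comptoinc} is mistaken: applied with $\ideal{I} := \principal{G}$, $A := G$, and the generic taken to be $H$, that proposition says exactly that if a $3$-$\principal{G}$-generic $H$ computes $G$, then $H$ lies inside some $\principal{G}$-hereditarily uniformly $G$-computing $S \in \principal{G}$; intersecting $S$ with $G$ gives your $W$. Your Stage~1 is precisely the proof of that proposition, carried out by hand. The paper simply invokes it and then, in place of your contradiction argument, proves the stronger contrapositive statement: for \emph{every} $e$ and $i$, if $\Phi^G_e$ is an infinite subset of $G$ then it has an infinite $G$-computable subset $A_{e,i}$ with $\Phi^{A_{e,i}}_i \neq G$. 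This is obtained from Lemma~\ref{L:density} with $F^* = \emptyset$, which slightly simplifies the case analysis (your nonempty $F^*$ is harmless but unnecessary). Your handling of alternative~(3) via the ``same least witness'' observation is fine under the standard convention that the search is by canonical index (so any subset of $T$ with an element above $\max F$ has index exceeding that of any subset of $F^* \cup F$), but note that the contradiction is already available more directly: the witness $F'_0 \subseteq F^* \cup F \subseteq W^*$ found by $\Phi^{F^* \cup F}_{e_0}(x)$ yields an extension $(E^* \cup F'_0,\,\text{tail of }W^*)$ of $(E^*,W^*)$ with $\Phi^{E^* \cup F'_0}_i(x)\downarrow \neq G(x)$, violating the no-wrong-value property from Stage~1.
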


\begin{proof}
We show that each infinite $G$-computable subset of $G$ has an infinite subset that is computable from $G$, but does not compute $G$ via a prescribed functional. That is, for all $e,i \in \omega$, we build a set $A_{e,i}\leq_T G$ to satisfy the following requirement:
\begin{center}
\begin{tabu}{llX}
$\Req_{e,i}$ & : & if $\Phi^G_e$ is an infinite subset of $G$, then $A_{e,i}$ is an infinite subset of $\Phi^G_e$ and $\Phi_i^{A_{e,i}} \neq G$.
\end{tabu}
\end{center}
In other words, the set~$G$ has no~$\principal{G}$-hereditarily uniformly~$G$-computing subset in the ideal $\principal{G}$. By Proposition~\ref{P:comptoinc}, it follows that no~$3$-$\principal{G}$-generic set satisfying the~$\principal{G}$-condition~$(\emptyset,G)$ can compute~$G$, which is the desired result.

So fix $e$. If $\Phi^G_e$ is not infinite or not contained in $G$, there is nothing to do, and we can let $A_{e,i} = \emptyset$ for all $i$. Otherwise, by~$3$-$\ideal{I}$-genericity of~$G$, we can find an~$\ideal{I}$-condition~$(D^*,S^*)$ satisfied by~$G$ forcing that~$\Phi^{\dot{G}}_e$ is total, infinite, and contained in~$\dot{G}$. In the parlance of Lemma~\ref{L:density}, this means that $(D^*,S^*)$ and all its extensions are good. Let $F^* = \emptyset$, and let $\collection{C}$ be the dense collection of conditions of the lemma. This is a $\Sigma^0_2(\ideal{I})$-definable collection, and so $G$ must meet it, say via some $(D,S) \leq (D^*,S^*)$. By choice of $(D^*,S^*)$, it cannot be that $(D,S)$ satisfies the first alternative in the definition of $\collection{C}$. Hence, one of the following is true:
\begin{itemize}
\item there an~$x$ such that~$\Phi^{F}_i(x) \uparrow$ for all finite~$F \subseteq S$;
\item there is an~$x \leq \max D$ and a finite~$F \subseteq \Phi^{(D,S)}_e$ such that~$\Phi^{F}_i(x) \downarrow \neq D(x)$.
\end{itemize}
In the first case, we may take~$A_{e,i} = \{x \in \Phi^G_e : x \geq \min S\}$, as in this case,~$\Phi^U_i$ cannot be total for any~$U \subseteq S$. In the second case, we may take~$A_{e,i} = F \cup \{x \in \Phi^G_e : x \geq \min S\}$, as then we have
\[
\Phi^{A_{e,i}}_i(x) \downarrow~ = \Phi^F_i(x) \downarrow \neq D(x) = G(x).
\]
Obviously, $A_{e,i} \leq_T \Phi^G_e$, so $A_{e,i}$ is $G$-computable and thus belongs to $\principal{G}$.
\end{proof}

Taking $\ideal{I} = COMP$, and letting $G$ be a $3$-$\ideal{I}$-generic computable in $\emptyset^{(3)}$, the theorem immediately yields the existence of a $\Delta^0_4$ ideal, namely $\principal{G}$, that is not~$n$-generically-coded for any~$n \geq 3$. By Corollary~\ref{C:Delta2coded}, we cannot improve this to $\Delta^0_2$ ideals, but we can improve it to $\Delta^0_3$ ones by a slightly more careful argument. Thus, we get a sharp dividing line in terms of which arithmetical ideals are and are not generically-coded.

\begin{proposition}\label{P:Delta3noncoded}
There is a principal~$\Delta^0_3$ ideal which is not~$n$-generically-coded for any~$n \geq 3$.
\end{proposition}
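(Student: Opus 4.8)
The plan is to apply Theorem~\ref{T:idealnoncomp} with $\ideal{I} = COMP$, but to replace the generic $G$ appearing there by a set built economically enough to lie in $\Delta^0_3$. Recall that a $3$-$COMP$-generic is only guaranteed to satisfy $G \leq_T \emptyset^{(3)}$, so that $\principal{G}$ is in general merely $\Delta^0_4$; this is exactly the obstacle. The key observation is that the proof of Theorem~\ref{T:idealnoncomp} (through Proposition~\ref{P:comptoinc}) does not use full $3$-$COMP$-genericity of $G$: all it needs is that $G$ is infinite and that, for every pair $e,i$, the set $G$ meets the $\Sigma^0_2$-definable dense collection $\collection{C} = \collection{C}_{e,i}$ furnished by Lemma~\ref{L:density} for this $e,i$ with $F^* = \emptyset$ --- and with $F^* = \emptyset$ this collection is dense below \emph{every} condition. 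By the ``moreover'' clause of Lemma~\ref{L:density}, over $COMP$ one can, uniformly $\emptyset''$-computably, both find an extension of a given condition inside $\collection{C}_{e,i}$ and read off which of alternatives~(1)--(3) it satisfies. Hence the whole construction of $G$ can be run recursively in $\emptyset''$.

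In detail, I would build a descending sequence of $COMP$-conditions $(\emptyset,\omega) = (D_0,S_0) \geq (D_1,S_1) \geq \cdots$ recursively in $\emptyset''$ and set $G = \bigcup_s D_s$, interleaving two kinds of step. Infinitely often I adjoin $\min S_s$ to $D_s$; this makes $G$ infinite and forces $\min S_s \to \infty$, so $G(n)$ is decided at the first stage with $\min S_s > n$, whence $G \leq_T \emptyset''$. At the stage assigned to a pair $(e,i)$ I use Lemma~\ref{L:density} --- with the current condition as $(D^*,S^*)$, with $F^* = \emptyset$, and with this $e,i$ --- to pass $\emptyset''$-computably to an extension $(D_{s+1},S_{s+1}) \in \collection{C}_{e,i}$ and record which alternative it meets. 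If it meets alternative~(1) I set $A_{e,i} = \emptyset$; if it meets alternative~(2) or~(3) I define $A_{e,i}$ from the witnessing $x$ (and finite set $F$) exactly as in the proof of Theorem~\ref{T:idealnoncomp}, so that $A_{e,i} \leq_T \Phi^G_e \leq_T G$.

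The verification then runs parallel to Theorem~\ref{T:idealnoncomp}. Suppose $\Phi^G_e$ is an infinite subset of $G$. Then the condition assigned to $(e,i)$ cannot have met alternative~(1): the conditions realizing alternative~(1) in the proof of Lemma~\ref{L:density} force the relevant failure of $\Phi^{\dot G}_e$ in the strong, witnessed sense, so that this failure holds of every set satisfying the condition --- in particular of $G$ --- contrary to hypothesis. So it met alternative~(2) or~(3), and the corresponding $A_{e,i}$ is an infinite subset of $\Phi^G_e$ lying in $\principal{G}$ with $\Phi^{A_{e,i}}_i \neq G$ (non-total in case~(2); and $\Phi^{A_{e,i}}_i(x) = \Phi^F_i(x) \neq D_{s+1}(x) = G(x)$ in case~(3)). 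Thus $G$ has no $\principal{G}$-hereditarily uniformly $G$-computing subset in $\principal{G}$: were $S \subseteq G$ one, via $e'$ say, then choosing $e$ with $\Phi^G_e = S$ (possible as $S \leq_T G$) makes $A_{e,e'}$ an infinite subset of $S$ in $\principal{G}$ with $\Phi^{A_{e,e'}}_{e'} \neq G$, contradicting the choice of $e'$. In particular $G$ is non-computable, so $\principal{G} \neq COMP$ and $(\emptyset,G)$ is a genuine $\principal{G}$-condition; and then, exactly as at the close of the proof of Theorem~\ref{T:idealnoncomp}, Proposition~\ref{P:comptoinc} (for $\ideal{I} = \principal{G}$ and $A = G$) gives that no $n$-$\principal{G}$-generic satisfying $(\emptyset,G)$ computes $G$. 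Since such generics exist for every $n$ (by the usual construction started from that condition) and $G \in \principal{G}$, the ideal $\principal{G}$, which is $\Delta^0_3$ because $G \leq_T \emptyset''$, is not $n$-generically-coded for any $n \geq 3$.

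The step I expect to be the main obstacle is making the $\emptyset''$-bookkeeping airtight: one must check that the ``moreover'' clause of Lemma~\ref{L:density} really does let a $\emptyset''$ oracle process an \emph{arbitrary} current condition (this is precisely why $F^* = \emptyset$, making $\collection{C}_{e,i}$ dense below every condition, is essential), and that the finite data defining $A_{e,i}$ in alternatives~(2) and~(3) --- in particular a threshold large enough that the computation $\Phi^{A_{e,i}}_i$ queries only the finite ``$F$'' part of its oracle --- is available at that same stage. The remaining points, that $A_{e,i}$ is genuinely an infinite subset of $\Phi^G_e$ and computable from it, are as in the proof of Theorem~\ref{T:idealnoncomp} and introduce nothing new.
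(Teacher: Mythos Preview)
Your proposal is correct and follows essentially the same approach as the paper: build $G$ explicitly over $COMP$ by iterating the $\emptyset''$-effective density of Lemma~\ref{L:density} with $F^*=\emptyset$, define the sets $A_{e,i}$ according to which alternative is met, and conclude via Proposition~\ref{P:comptoinc} that $\principal{G}$ is not generically-coded. Your version is slightly more explicit than the paper's in interleaving steps to force $G$ infinite and in checking that $G$ is non-computable, but the argument is the same.
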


\begin{proof}
Let~$\ideal{I} = COMP$. The proof is roughly the same as that of Theorem~\ref{T:idealnoncomp}, and we build the sets $A_{e,i}$ to satisfy the same requirements as there. The only difference is that we now need to explicitly construct the set $G$. To this end, we build a sequence of $\ideal{I}$-conditions $(D_0,S_0) \geq (D_1,S_1) \geq \cdots$ by stages, and let $G = \bigcup_s D_s$. At stage $0$, we let $(D_0,S_0) = (\emptyset,\omega)$. We define $A_{e,i}$ at stage $s + 1 = \seq{e,i}$, at which we assume we have already defined $(D_s,S_s)$. Applying Lemma~\ref{L:density} with $(D^*,S^*) = (D_s,S_s)$ and $F^* = \emptyset$, we let $(D_{s+1},S_{s+1})$ be the extension of $(D_s,S_s)$ in $\collection{C}$ given by the lemma. If $(D_{s+1},S_{s+1})$ satisfies the first alternative in the definition of $\collection{C}$, then it forces that $\Phi^{\dot{G}}_e$ is not total, not infinite, or not contained in $\dot{G}$. It is not difficult to check that this fact will be true of any set that satisfies $(D_{s+1},S_{s+1})$, and so in particular of $G$, even though we are not making $G$ even $3$-generic. Thus, $\Phi^G_e$ will not be an infinite subset of $G$ in $\principal{G}$, so we need not worry about it. In this case, we can set $A_{e,i} = \emptyset$. Otherwise, $(D_{s+1},S_{s+1})$ will satisfy one of the other two alternatives in the definition of $\collection{C}$, and then the argument can proceed as in the theorem. By the effectiveness of Lemma~\ref{L:density}, the construction can be performed computably in $\emptyset''$, so $G$ will be $\Delta^0_3$.
\end{proof}

One further consequence of the above is the following purely computability-theoretic result. Slaman and Groszek (unpublished) showed that there is a $\Delta^0_3$ set with no modulus of the same degree. We obtain the analogous result for introreducibility. In fact, it is easy to see that if an infinite set has a modulus of the same degree then it also has an introreducible subset of the same degree, but of course not conversely. Thus, the following is in fact an extension of their result, albeit using a very different argument. The $\Delta^0_3$ bound here is sharp: it is easy to see that every $\Delta^0_2$ set has a modulus of the same degree.

\begin{proposition}
There is a~$\Delta^0_3$ set~$G$ with no infinite introreducible subset of the same degree.
\end{proposition}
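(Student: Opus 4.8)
I would take the $\Delta^0_3$ set $G$ produced in Proposition~\ref{P:Delta3noncoded} --- that is, the $3$-$COMP$-generic-like set $G$, computable from $\emptyset''$, such that the principal ideal $\principal{G}$ is not $n$-generically-coded for any $n \geq 3$ --- and show that this very $G$ has no infinite introreducible subset of the same Turing degree. The key observation linking the two statements is that an infinite introreducible set $H$ is, in particular, computed by each of its infinite subsets; if moreover $H \equiv_T G$, then since subsets of $G$ in $\principal{G}$ are exactly the infinite $G$-computable subsets of $G$, and each such subset computes $H$ and hence $G$, the reductions can be made uniform. So the heart of the matter is to upgrade ``each infinite subset of $H$ computes $G$'' (non-uniform) to ``some single functional computes $G$ from every infinite subset of $H$'' (uniform), i.e., to show $H$ would be a $\principal{G}$-hereditarily uniformly $G$-computing subset of $G$ in the sense of the paper --- contradicting exactly what the construction of $G$ in Proposition~\ref{P:Delta3noncoded} ruled out.

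First I would spell out the setup: suppose toward a contradiction that $H \subseteq G$ is infinite, introreducible, and $H \equiv_T G$. Since $H \leq_T G$, $H \in \principal{G}$, and every infinite subset $T \subseteq H$ is also $G$-computable, hence in $\principal{G}$; introreducibility gives $T \geq_T H \equiv_T G$ for every such $T$. The issue is that the index of the reduction $T \mapsto G$ may depend on $T$. To handle this I would pass to the standard uniformly introreducible set associated to $H$, namely $\widehat{H} = \{ \sigma \in 2^{<\omega} : \sigma = H \res |\sigma| \}$ (coded as a set of integers), or rather the set of initial-segment codes of $H$ listed in increasing order; as noted in the excerpt (``every degree contains a uniformly introreducible set, namely the set of codes of all initial segments of any member of that degree''), $\widehat{H}$ is uniformly introreducible and $\widehat{H} \equiv_T H$. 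The point is that $\widehat{H}$ uniformly computes $H$, hence uniformly computes $\widehat{H}$; but I need a subset of $G$, and $\widehat{H}$ need not be a subset of $G$. So instead I would observe that $H$ itself, being introreducible, uniformly computes $H$ from its infinite subsets \emph{after fixing} a single infinite subset's worth of information --- more carefully: fix one infinite $T_0 \subseteq H$ with $T_0 \leq_T G$ and let $d$ be an index with $\Phi^{T_0}_d = G$; for a general infinite $T \subseteq H$ we have $T \geq_T H$ (non-uniformly) but we can compute $T_0$ from $H$ and hence the pair $(H,T)$ computes $G$ uniformly. This still isn't quite ``$\Phi^T_e = G$ for a single $e$,'' so the genuinely delicate step is to arrange uniformity.

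The clean way around this, which I would pursue, is: rather than insisting on the subset being $H$, use that $\widehat{H} \equiv_T H \equiv_T G$ and that $\widehat{H}$ is a uniformly introreducible set in the degree of $G$; then the \emph{contrapositive} form of Proposition~\ref{P:comptoinc} is what to invoke. Specifically, Proposition~\ref{P:comptoinc} (applied with $n = 3$, $A = G$, and with a suitable $3$-generic in $\principal{G}$) shows that if some $3$-$\principal{G}$-generic $G'$ computes $G$, then $G'$ lies in a $\principal{G}$-hereditarily uniformly $G$-computing set $S \in \principal{G}$; and Theorem~\ref{T:idealnoncomp} (which is the abstract core behind Proposition~\ref{P:Delta3noncoded}) says $G$ has no such $S$ when $G$ is chosen as in the construction. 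So the real content I must supply is: \emph{if} $G$ had an infinite introreducible subset $H$ of the same degree, \emph{then} $G$ (equivalently, the uniformly introreducible recoding inside $G$) would be a $\principal{G}$-hereditarily uniformly $G$-computing set in $\principal{G}$. I would establish this by noting that from an infinite introreducible $H \subseteq G$ with $H \equiv_T G$ one can form the infinite $G$-computable set $S^\dagger = \{\,n \in H : n \text{ codes an initial segment of } G\text{'s characteristic string}\,\}$ --- more simply, that the uniformly introreducible set built from $H$ can be ``implanted'' as a subset of $G$ up to a fixed computable bijection, because $H$ already lives inside $G$ --- so that every infinite subset of $S^\dagger$ in $\principal{G}$ uniformly computes $G$; then $G \subseteq S^\dagger$ (after adjusting by a finite set, harmless) exhibits the forbidden configuration. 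I expect the main obstacle to be exactly this last implantation/uniformization step: turning the non-uniform introreducibility of $H$ together with $H \equiv_T G$ into a \emph{single} functional computing $G$ from every infinite $\principal{G}$-subset of a $G$-computable set, so that Theorem~\ref{T:idealnoncomp} / Proposition~\ref{P:Delta3noncoded} can be cited to derive the contradiction. The sharpness remark ($\Delta^0_2$ sets all have moduli of the same degree, hence introreducible subsets of the same degree) I would state in one line, quoting Lemma~\ref{L:Delta2modulus} with $n = 2$ together with Solovay's modulus theorem as recalled in the excerpt.
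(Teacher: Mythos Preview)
Your approach has a genuine gap at the uniformization step, and the paper's proof proceeds quite differently.

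You correctly identify the obstacle: introreducibility of $H$ only gives that every infinite $T \subseteq H$ computes $H$ (hence $G$) \emph{non-uniformly}, whereas to contradict Proposition~\ref{P:Delta3noncoded} you would need $H$ (or some derived set) to be $\principal{G}$-hereditarily \emph{uniformly} $G$-computing. Your proposed repairs do not close this gap. The set $\widehat{H}$ of initial-segment codes is uniformly introreducible but is not a subset of $G$, and your ``implantation'' idea does not work: to decode an element $p_H(\ulcorner H \res i\urcorner)$ back to $\ulcorner H \res i\urcorner$ you must invert $p_H$, which already requires $H$; and the set $S^\dagger = \{n \in H : n \text{ codes an initial segment of } G\}$ has no reason to be infinite. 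More fundamentally, even granting that every $G$-computable infinite $T \subseteq H$ computes $G$, Proposition~\ref{P:Delta3noncoded} only supplies, for each pair $(e,i)$, a $G$-computable $A_{e,i} \subseteq \Phi^G_e$ with $\Phi_i^{A_{e,i}} \neq G$; nothing prevents $A_{e,i}$ from computing $G$ via some other index, so no contradiction arises.

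The paper instead reruns the construction of Proposition~\ref{P:Delta3noncoded} with a crucial change: for each $e$ it builds a \emph{single} infinite subset $A_e \subseteq \Phi^G_e$ with $G \nleq_T A_e$, diagonalizing against all $\Phi_i$ in turn by growing a finite initial segment $F_{e,s}$ of $A_e$ across stages $s = \langle e,i\rangle$ using Lemma~\ref{L:density} with $F^* = F_{e,s}$. The point is that $A_e$ is computable only in the construction (hence in $\emptyset''$), \emph{not} necessarily in $G$; this suffices because introreducibility quantifies over \emph{all} infinite subsets, not just the $G$-computable ones. That is exactly the leverage that lets one bypass the uniformity issue you are stuck on, and it is why the result does not follow formally from Proposition~\ref{P:Delta3noncoded} as a black box.
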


\begin{proof}
We show that every infinite $G$-computable subset of $G$ has an infinite subset of its own that does not compute $G$. (The latter subsets will only be computable in the construction, and hence in $\emptyset''$, but not necessarily in $G$.) To this end, we modify the proof of Proposition~\ref{P:Delta3noncoded}. Instead of building, for each $e$, a separate subset $A_{e,i}$ of $\Phi^G_e$ for each $i$ and ensuring that $A_{e,i}$ does not compute $G$ via $\Phi_i$, we build a single subset $A_e$ to satisfy the following requirement:
\begin{center}
\begin{tabu}{llX}
$\Req_e$ & : & if $\Phi^G_e$ is an infinite subset of $G$, then $A_e$ is an infintie subset of $\Phi^G_e$ and $G \nleq_T A_e$.
\end{tabu}
\end{center}
We build the sets $A_e$ by stages along with $G$, and let $F_{e,s}$ be the initial segment to $A_e$ built at stage $s$. Initially, we let $F_{e,0} = \emptyset$ for all $e$. At stage $s + 1 = \seq{e,i}$, we are then given $(D_s,S_s)$ along with $F_{e,s}$, and we assume inductively that $F_{e,s} \subseteq \Phi_e^{(D_s,S_s)}$. We now apply Lemma~\ref{L:density} with $(D^*,S^*) = (D_s,S_s)$ and $F^* = F_{e,s}$ to get an extension $(D,S)$ of $(D_s,S_s)$ in $\collection{C}$. We consider three cases.

\medskip
\noindent \emph{Case 1: $(D,S)$ satisfies the first alternative in the definition of $\collection{C}$.} In this case, we do not have to worry about $\Phi^G_e$ being an infinite subset of $G$, so we also do not have worry about making $A_e$ infinite. We thus set $F_{e,s+1} = F_{e,s}$ and $(D_{s+1},S_{s+1}) = (D,S)$.

\medskip
\noindent \emph{Case 2: $(D,S)$ satisfies the second alternative in the definition of $\collection{C}$ but not the first.} Since $(D,S)$ does not satisfy the first alternative, it does not force that $\Phi^{\dot{G}}_e$ is not infinite, so we can find an $x > \max F_{e,s}$ and an extension $(\widetilde{D},\widetilde{S})$ of $(D,S)$ such that $F_{e,s} \cup \{x\} \subseteq \Phi^{(\widetilde{D},\widetilde{S})}_e$. We let $F_{e,s+1} = F_{e,s} \cup \{x\}$ and $(D_{s+1},S_{s+1}) = (\widetilde{D},\widetilde{S})$. Since $(D,S)$ satisfies the second alternative, we know that $\Phi^U_i$ cannot be total for any set $U$ extending $F_{e,s}$, and so also for any set extending $F_{e,s+1}$.

\medskip
\noindent \emph{Case 3: otherwise.} In this case, $(D,S)$ satisfies the third alternative in the definition of $\collection{C}$, so we can fix a finite set $F \subseteq \Phi^{(D_{s+1},S_{s+1})}_e$ with $\max F_{e,s} < \min F$ witnessing this fact. Since $(D,S)$ does not satisfy the first alternative, we can pass to an extension if necessary to add an element to $F$. Thus, without loss of generality, we may assume $F \neq \emptyset$. We let $F_{e,s+1} = F_{e,s} \cup F$ and $(D_{s+1},S_{s+1}) = (D,S)$.

\medskip
It is easy to verify that for each $e \in \omega$, the construction at stages $s+1 = \seq{e,i}$ ensures the satisfaction of requirement $\Req_e$. And as in our previous result, the entire construction can be carried out by $\emptyset''$, so $G$ is $\Delta^0_3$.
\end{proof}

We finish this section with a converse to Proposition~\ref{P:comptoinc}. We do not know whether the level of genericity below can be improved from~$4$ to~$3$. This is because deciding~$\Sigma^0_3$ facts in general requires Mathias~$4$-genericity (see~\cite{CDHS-2014}, Lemma 3.3), rather than~$3$-genericity as in the case of Cohen forcing.

\begin{proposition}\label{P:inctocomp}
Let~$\ideal{I}$ be an ideal and~$A$ any set. If~$G$ is~$4$-$\ideal{I}$-generic and contained in a set~$S \in \ideal{I}$ such that each infinite subset of~$S$ in~$\ideal{I}$ has an~$\ideal{I}$-hereditarily uniformly~$A$-computing subset in~$\ideal{I}$ then~$G \geq_T A$.
\end{proposition}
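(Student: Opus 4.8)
The plan is to show that the hypothesis gives us a dense (in fact $\Sigma^0_3(\ideal{I})$- or $\Sigma^0_4(\ideal{I})$-definable) family of requirements forcing $\Phi^{\dot G}_i$ to disagree with $A$ for each $i$, so that by $4$-$\ideal{I}$-genericity no functional $\Phi_i$ can witness $G \not\geq_T A$, while at the same time $G$ does compute $A$ via the uniform reductions provided by the hypothesis. More precisely, let $e^*$ be the fixed index such that $\Phi^T_{e^*} = A$ for every infinite $T \subseteq S$ in $\ideal{I}$ that is a subset of the ``first-level'' $\ideal{I}$-hereditarily uniformly $A$-computing set; since $G \subseteq S$ and $G \in \principal{G} \subseteq \ideal{I}$, once we know $G$ is infinite (which $3$-$\ideal{I}$-genericity via Proposition~\ref{P:domination} guarantees) we get $\Phi^G_{e^*} = A$ and hence $G \geq_T A$. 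So the real content is to derive a contradiction from assuming some $\Phi^G_i = A$ does \emph{not} hold — no wait: we want the positive statement $G \geq_T A$, which we have just obtained for free from the hypothesis (the outermost set $S$ itself is $\ideal{I}$-hereditarily uniformly $A$-computing is \emph{not} assumed, only that each infinite subset of $S$ in $\ideal{I}$ \emph{has} such a subset). So the work is to bootstrap from ``every infinite subset of $S$ in $\ideal{I}$ has an $\ideal{I}$-hereditarily uniformly $A$-computing subset in $\ideal{I}$'' to ``$G$ itself computes $A$''.

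The key idea, then, is a forcing-the-reduction argument: for each index $e$ that could give rise to such a sub-subset, consider the collection $\collection{C}_e$ of $\ideal{I}$-conditions $(D,S')$ (with $S' \subseteq S$) such that either (i) $(D,S')$ forces $\Phi^{\dot G}_e$ to fail to be an infinite $A$-computing subset in the required uniform sense — i.e., there is $x$ with $\Phi^{D'}_e(x)\downarrow \neq A(x)$ for some finite $D \subseteq D' \subseteq D \cup S'$, or $\Phi^{\dot G}_e$ is forced not total or not infinite — or (ii) $(D,S')$ forces that $\Phi^{\dot G}_e$ \emph{is} such a witness, in which case $D \cup S'$ is itself an $\ideal{I}$-hereditarily uniformly $A$-computing set containing $G$, and we are essentially in the situation of the proof of Proposition~\ref{P:comptoinc} run in reverse. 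The first step is to verify that this collection is dense below $(\emptyset, S)$: given any $(D,S') \leq (\emptyset,S)$, the set $D \cup S'$ is an infinite subset of $S$ in $\ideal{I}$, so by hypothesis it has an $\ideal{I}$-hereditarily uniformly $A$-computing subset $T \in \ideal{I}$, witnessed by some index $e$; since $T$ is infinite we can extend $(D,S')$ to a condition $(D,T \setminus F)$ (for appropriate finite $F$) which forces $\Phi^{\dot G}_e \supseteq \dot G$-relevant data — and this extension lies in $\collection{C}_e$ via alternative (ii). Then $4$-$\ideal{I}$-genericity (needed because these conditions involve $\Sigma^0_3$-type statements about totality and the uniform computation, cf. the remark preceding the proposition and \cite[Lemma 3.3]{CDHS-2014}) produces a single condition $(D^*,S^*) \leq (\emptyset,S)$ met by $G$ lying in some $\collection{C}_e$; since $G$ is infinite and contained in the $\ideal{I}$-hereditarily uniformly $A$-computing set $D^* \cup S^*$, we get $\Phi^G_e = A$, hence $G \geq_T A$.

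The step I expect to be the main obstacle is pinning down exactly \emph{which} $\Sigma^0_n(\ideal{I})$ collection to use so that meeting it actually delivers a working reduction, and in particular handling the ``alternative (i)'' branch: a priori, genericity might force $\Phi^{\dot G}_e$ to be a \emph{non-witness} for the particular $e$ we tried, and we would then need to argue that, running over all $e$, $G$ cannot escape \emph{every} alternative (ii) — i.e., that the conditions forcing ``$\Phi^{\dot G}_e$ is not an $\ideal{I}$-hereditarily uniformly $A$-computing subset'' for all $e$ simultaneously cannot all be dense, which is precisely where the hypothesis on $S$ is used. The cleanest way to organize this is probably: show the single collection $\collection{C} = \{(D,S') : (D,S') \text{ is an } \ideal{I}\text{-condition with } S' \subseteq S \text{ and } \exists e\, [(D,S') \text{ forces } \Phi^{\dot G}_e = A]\}$ is $\Sigma^0_4(\ideal{I})$-definable and dense below $(\emptyset,S)$ — density being exactly the hypothesis plus the observation that ``$\Phi^{\dot G}_e = A$'' can be forced once we have an infinite subset in $\ideal{I}$ on which $\Phi_e$ uniformly computes $A$ — so that $G$ meets it and the witnessing $e$ gives $\Phi^G_e = A$. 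I will also need to double-check the definability count (the ``forces $\Phi^{\dot G}_e = A$'' clause unwinds to a $\Pi^0_2(\ideal{I})$-over-$A$ statement about the condition, and $A$ being an arbitrary set rather than $\Delta^0_n(\ideal{I})$ means we cannot assume better, but since $A \leq_T G$ will hold \emph{a posteriori} this is not circular — the forcing collection only needs to be arithmetical in $\ideal{I}$ together with $A$, and here I should be careful: if $A$ is not arithmetical in $\ideal{I}$ this is a genuine issue, so very likely the intended reading carries an implicit hypothesis that $A$ is $\Delta^0_n(\ideal{I})$ for the relevant $n$, matching Proposition~\ref{P:comptoinc}; I would state the result with that hypothesis and note it is the natural converse).
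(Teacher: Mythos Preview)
Your final approach---show that the collection of conditions below $(\emptyset,S)$ forcing $\exists e\,[\Phi^{\dot G}_e = A]$ is dense, and invoke $4$-genericity to meet it---is essentially the paper's argument, just organized as ``dense, hence met'' rather than ``decided, and the negative decision is impossible.'' The paper takes a condition $(D^*,S^*)\leq(\emptyset,S)$ satisfied by $G$ that decides the $\Sigma^0_3(\ideal{I})$ sentence $\exists e\,\forall x\,[\Phi^{\dot G}_e(x)\downarrow = A(x)]$, thins $S^*$ to an $\ideal{I}$-hereditarily uniformly $A$-computing subset (using the hypothesis), and then checks that no extension can force the negation; this is the same density argument in contrapositive form.

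There is, however, one genuine gap in your write-up. Your closing worry about definability---that $A$ might not be arithmetical in $\ideal{I}$, and that one should therefore add a $\Delta^0_n(\ideal{I})$ hypothesis on $A$---is unfounded, and the paper dispatches it in one line: the hypothesis already forces $A\in\ideal{I}$. Indeed, $S$ itself is an infinite subset of $S$ in $\ideal{I}$, so it has an $\ideal{I}$-hereditarily uniformly $A$-computing subset $T\in\ideal{I}$; then $A\leq_T T$ and $\ideal{I}$ is downward closed. Once $A\in\ideal{I}$, the formula $\exists e\,\forall x\,[\Phi^{\dot G}_e(x)\downarrow = A(x)]$ is honestly $\Sigma^0_3(\ideal{I})$, and $4$-genericity suffices exactly as stated. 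No extra hypothesis is needed.

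A smaller technical point you gloss over: once you pass to a condition $(D^*,S^*)$ with $S^*$ $\ideal{I}$-hereditarily uniformly $A$-computing via index $e^*$, the generic $G$ satisfying $(D^*,S^*)$ is not literally a subset of $S^*$ (it contains $D^*$ as well), so $\Phi^G_{e^*}$ need not equal $A$. The paper handles this by defining a new index $e$ with $\Phi^U_e = \Phi^{U\setminus D^*}_{e^*}$ for all oracles $U$; then for any extension $(D^{**},S^{**})$ one has $\Phi^{D^{**}\cup S^{**}}_e = \Phi^{S^{**}}_{e^*} = A$ (since $S^{**}\subseteq S^*$ is infinite and in $\ideal{I}$), which is what blocks both failure modes (divergence at some $x$, or convergence to a wrong value). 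You should make this modification of the index explicit.
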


\begin{proof}
Suppose every subset of~$S$ in~$\ideal{I}$ has an~$\ideal{I}$-hereditarily uniformly~$A$-computing subset in~$\ideal{I}$. Then in particular,~$A$ belongs to~$\ideal{I}$, so the formula
\begin{equation}\label{E:noncomp}
\exists e~\forall x~[\Phi^{\dot{G}}_e(x) \downarrow~ = A(x)]
\end{equation}
is equivalent to a~$\Sigma^0_3(\ideal{I})$ one. Since~$G$ is~$4$-$\ideal{I}$-generic and contained in~$S$, there must be some~$\ideal{I}$-condition~$(D^*,S^*) \leq (\emptyset,S)$ satisfied by~$G$ that decides this formula. This means~$S^* \subseteq S$, so by passing to an extension if necessary we may assume~$S^*$ is itself~$\ideal{I}$-hereditarily uniformly~$A$-computing subset in~$\ideal{I}$, say witnessed by~$e^* \in \omega$. Let~$e \in \omega$ be the index of the functional defined by
\[
\Phi^U_e = \Phi^{U - D^*}_{e^*}
\]
for all oracles~$U$.
%Thus,~$\Phi^{D^* \cup S^*}_{e} = \Phi^{S^*}_{e^*} = A$.
Now if~$(D^*,S^*)$ forced the negation of \eqref{E:noncomp}, then some extension~$(D^{**},S^{**})$ of it would, for some~$x \in \omega$, either force~$\Phi^{\dot{G}}_e(x) \uparrow$ or else~$\Phi^{\dot{G}}_e(x) \neq A(x)$. In the former case, there could be no finite~$F \subseteq S^{**}$ such that~$\Phi^{D^{**} \cup F}_{e}(x) \downarrow$, so in particular~$\Phi^{D^{**} \cup S^{**}}_{e}$ could not be total. In the latter, it would have to be that~$\Phi^{D^{**}}_e(x) \downarrow \neq A(x)$ with use bounded by~$\min S^{**}$, so also~$\Phi^{D^{**} \cup S^{**}}_{e}(x) \downarrow \neq A(x)$. But~$\Phi^{D^{**} \cup S^{**}}_{e} = \Phi^{S^{**}}_{e^*} = A$, so neither of these cases can be true. We conclude that~$(D^*,S^*)$ forces \eqref{E:noncomp}, and hence by genericity that~$G \geq_T A$.
\end{proof}

\begin{corollary}\label{C:comptoinc}
Let~$\ideal{I}$ be an ideal and~$A$ any~$\Delta^0_n(\ideal{I})$ set,~$n \geq 4$. The following are equivalent for any~$n$-$\ideal{I}$-generic~$G$:
\begin{enumerate}
\item~$G \geq_T A$;
\item~$G$ is contained in an~$\ideal{I}$-hereditarily uniformly~$A$-computing set~$S \in \ideal{I}$. 
\end{enumerate}
\end{corollary}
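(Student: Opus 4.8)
The plan is to derive both implications directly from the two preceding propositions, with the only genuine work being to reconcile the hypothesis of Proposition~\ref{P:inctocomp} with clause~(2).

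For the implication $(1) \Rightarrow (2)$, I would simply invoke Proposition~\ref{P:comptoinc}: since $n \geq 4 \geq 3$ and $A$ is $\Delta^0_n(\ideal{I})$, that proposition applies verbatim to the $n$-$\ideal{I}$-generic $G$ with $G \geq_T A$, and produces an $\ideal{I}$-hereditarily uniformly $A$-computing set $S \in \ideal{I}$ containing $G$.

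For $(2) \Rightarrow (1)$, the key observation is that being $\ideal{I}$-hereditarily uniformly $A$-computing is inherited by infinite subsets in $\ideal{I}$, and \emph{with the same witnessing index}. Indeed, if $S$ is $\ideal{I}$-hereditarily uniformly $A$-computing via $e$, and $T \subseteq S$ is infinite and in $\ideal{I}$, then every infinite $T' \subseteq T$ in $\ideal{I}$ is also an infinite subset of $S$ in $\ideal{I}$, so $\Phi^{T'}_e = A$; hence $T$ is $\ideal{I}$-hereditarily uniformly $A$-computing, again via $e$. Consequently, under clause~(2), every infinite subset of $S$ in $\ideal{I}$ has an $\ideal{I}$-hereditarily uniformly $A$-computing subset in $\ideal{I}$---namely, itself---which is precisely the hypothesis of Proposition~\ref{P:inctocomp}. (Here I would also note that $G$, being $4$-$\ideal{I}$-generic, is infinite, so $S \supseteq G$ is infinite and $A \in \ideal{I}$ by applying the witness $e$ to $T = S$, matching the remark at the start of the proof of Proposition~\ref{P:inctocomp}.) That proposition then yields $G \geq_T A$, completing the equivalence.

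There is essentially no obstacle: the substance lives entirely in Propositions~\ref{P:comptoinc} and~\ref{P:inctocomp}, and the corollary reduces to checking that ``$\ideal{I}$-hereditarily uniformly $A$-computing'' is downward closed under infinite $\ideal{I}$-subsets, so that the superficially weaker clause~(2) already supplies the hypothesis used in Proposition~\ref{P:inctocomp}. The only point deserving a moment's care is that a single witness $e$ for $S$ serves simultaneously for all of its infinite $\ideal{I}$-subsets---which is exactly the role played by the word ``uniformly'' in the definition.
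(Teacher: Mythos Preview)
Your proposal is correct and matches the paper's own proof essentially verbatim: the paper likewise derives $(1)\Rightarrow(2)$ from Proposition~\ref{P:comptoinc} and $(2)\Rightarrow(1)$ from Proposition~\ref{P:inctocomp} together with the observation that being $\ideal{I}$-hereditarily uniformly $A$-computing is closed under passing to infinite subsets. Your elaboration that the same witnessing index $e$ works for all infinite $\ideal{I}$-subsets is exactly the content of that closure fact, which the paper states without further comment.
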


\begin{proof}
The implication from (1) to (2) is Proposition~\ref{P:comptoinc}, and the implication from (2) to (1) follows by Proposition~\ref{P:inctocomp} and the fact that being~$\ideal{I}$-hereditarily uniformly~$A$-computing is closed under infinite subset.
\end{proof}

\begin{corollary}
Let~$\ideal{I}$ be an ideal and~$A$ any~$\Delta^0_n$ set,~$n \geq 4$. Then the following are equivalent:
\begin{enumerate}
\item every~$n$-$\ideal{I}$-generic~$G$ computes~$A$;
\item every set in~$\ideal{I}$ has an~$\ideal{I}$-hereditarily uniformly~$A$-computing subset in~$\ideal{I}$.
\end{enumerate}
\end{corollary}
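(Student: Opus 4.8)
The plan is to globalize the per-generic equivalence of Corollary~\ref{C:comptoinc}, deriving the two implications from Propositions~\ref{P:comptoinc} and~\ref{P:inctocomp} respectively. The directions are not quite symmetric: Proposition~\ref{P:comptoinc} produces, from a generic computing $A$, a \emph{superset} of the generic with the hereditary computing property, whereas condition~(2) asks for a \emph{subset} of a prescribed member of $\ideal{I}$; so the forward implication needs a small intersection argument, while the reverse one is a direct application of Proposition~\ref{P:inctocomp} with $\omega$ as the ambient set.

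For the implication from~(1) to~(2), fix $S \in \ideal{I}$, which we may assume to be infinite (the conclusion being vacuous otherwise). By the standard construction of a generic below a prescribed condition --- exactly as in the proof of Proposition~\ref{P:coneavoidance}, but carried out below $(\emptyset,S)$ rather than $(\emptyset,\omega)$ --- there is an $n$-$\ideal{I}$-generic $G$ with $G \subseteq S$; and $G$ is infinite, since it meets, for each $k$, the dense collection of conditions $(D,T)$ with $|D| \geq k$. By hypothesis~(1) we have $G \geq_T A$, and since $A$ is $\Delta^0_n$, hence $\Delta^0_n(\ideal{I})$, and $n \geq 3$, Proposition~\ref{P:comptoinc} yields an $\ideal{I}$-hereditarily uniformly $A$-computing set $S' \in \ideal{I}$ with $G \subseteq S'$. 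Now $S \cap S'$ is a subset of $S$; it lies in $\ideal{I}$ because $S \cap S' \leq_T S \oplus S'$; it is infinite because it contains $G$; and it is $\ideal{I}$-hereditarily uniformly $A$-computing because it is an infinite subset of $S'$ and this property, with the same witnessing index, is inherited by infinite subsets (as noted in the proof of Corollary~\ref{C:comptoinc}). Hence $S \cap S'$ is the required subset.

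For the implication from~(2) to~(1), apply Proposition~\ref{P:inctocomp} with $\omega$ in the role of $S$; this is legitimate since $COMP \subseteq \ideal{I}$, so $\omega \in \ideal{I}$. Any $n$-$\ideal{I}$-generic $G$ is trivially contained in $\omega$ and is $4$-$\ideal{I}$-generic because $n \geq 4$, and by~(2) every infinite subset of $\omega$ in $\ideal{I}$ --- that is, every infinite set in $\ideal{I}$ --- has an $\ideal{I}$-hereditarily uniformly $A$-computing subset in $\ideal{I}$. Hence Proposition~\ref{P:inctocomp} gives $G \geq_T A$, which is what we want. The one place calling for care is the intersection step in the first implication: one must keep in mind that Proposition~\ref{P:comptoinc} returns a superset of $G$, not a subset of the given $S$, and then verify that $S \cap S'$ is infinite, lies in $\ideal{I}$, and retains the hereditary property. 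Everything else is routine, and the existence of $n$-generics below a given condition is already implicit in Section~\ref{S:basics}.
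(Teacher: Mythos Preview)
Your proof is correct and follows essentially the same approach as the paper: for (1)$\Rightarrow$(2) you pick an $n$-$\ideal{I}$-generic below $(\emptyset,S)$, apply Proposition~\ref{P:comptoinc} to obtain a hereditarily $A$-computing superset $S'\in\ideal{I}$, and intersect with $S$; for (2)$\Rightarrow$(1) you invoke Proposition~\ref{P:inctocomp}. The only cosmetic difference is that for the reverse direction the paper phrases the containment as ``every generic is contained in some member of $\ideal{I}$'' while you make the cleaner choice $S=\omega$, which amounts to the same thing.
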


\begin{proof}
For the implication from (1) to (2), suppose every~$n$-$\ideal{I}$-generic computes~$A$. Let~$S$ be any member of~$\ideal{I}$, and choose an~$n$-$\ideal{I}$-generic~$G$ satisfying~$(\emptyset,S)$. By Proposition~\ref{P:comptoinc},~$G$ is contained in some~$\ideal{I}$-hereditarily uniformly~$A$-computing set~$S^* \in \ideal{I}$. Then~$S \cap S^*$ is an~$\ideal{I}$-hereditarily uniformly~$A$-computing subset of~$S$ in~$\ideal{I}$. The implication from (2) to (1) is Proposition~\ref{P:inctocomp} and the fact that every generic is contained in some member of~$\ideal{I}$.
\end{proof}

\section{Generics for different ideals}\label{S:ideals}
In this section we examine relationships between Mathias generic sets for different ideals. We ask how the ideals must relate in order for the generics to relate. 
We first establish some basic properties of ideals that will be useful to us in this section. Given two sets~$A$ and~$S$, define
\[
U_{A,S} = \{ p_S(\ulcorner A \res i \urcorner) : i \in \omega \},
\]
where~$\ulcorner A \res i \urcorner$ denotes the canonical index of the finite set~$A \res i$. Thus,~$U_{A,S}$ picks out a subset of~$S$ according to indices of initial segments of~$A$.

\begin{lemma}
Let~$A$ be any set, and suppose~$\ideal{I}$ is an ideal containing a set~$S$. If~$U_{A,S}$ has an infinite subset in~$\ideal{I}$, then~$A$ belongs to~$\ideal{I}$.
\end{lemma}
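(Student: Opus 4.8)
The key observation is that membership of $A$ in an ideal $\ideal{I}$ is equivalent to the existence of \emph{some} infinite subset $T$ of $U_{A,S}$ in $\ideal{I}$, because from any such $T$ we can read off $A$ essentially by decoding. The point is that the elements of $U_{A,S}$ are, by construction, $p_S(\ulcorner A \res i \urcorner)$ for varying $i$, and an infinite subset $T \subseteq U_{A,S}$ picks out infinitely many of these; since the indices $\ulcorner A \res i \urcorner$ are strictly increasing in $i$ and $p_S$ is strictly increasing, the elements of $T$, listed in increasing order, correspond to a strictly increasing subsequence $i_0 < i_1 < \cdots$ of the $i$'s, and the $k$-th element of $T$ is $p_S(\ulcorner A \res i_k \urcorner)$.

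\textbf{Main steps.} First I would fix an infinite $T \subseteq U_{A,S}$ with $T \in \ideal{I}$, and note that $S \in \ideal{I}$ by hypothesis, so $S \oplus T \in \ideal{I}$ and it suffices to show $A \leq_T S \oplus T$. Second, using $S$ as an oracle we can compute $p_S$, hence the inverse map sending an element $p_S(n) \in S$ back to $n$; applying this to the (computable from $T$) enumeration of $T$ in increasing order yields, $S \oplus T$-computably, the strictly increasing sequence $n_0 < n_1 < \cdots$ of canonical indices such that each $n_k = \ulcorner A \res i_k \urcorner$ for some $i_k$, with $i_0 < i_1 < \cdots \to \infty$. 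Third, from a canonical index $\ulcorner A \res i \urcorner$ one can computably recover both the length $i$ and the finite set $A \res i$ itself. Therefore, to decide whether $m \in A$ for a given $m$, we $S \oplus T$-computably search for some $k$ with $i_k > m$ (which exists since the $i_k$ are unbounded), decode $A \res i_k$ from $n_k$, and read off $A(m)$. This shows $A \leq_T S \oplus T$, and hence $A \in \ideal{I}$ by closure of $\ideal{I}$ under $\leq_T$ and $\oplus$.

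\textbf{Main obstacle.} The only subtlety is the bookkeeping needed to recover the sequence of lengths $i_k$ and to verify they tend to infinity: one must be careful that distinct $i$'s give distinct (indeed strictly increasing) canonical indices, so that the increasing enumeration of $T$ genuinely reflects an increasing enumeration of the corresponding $i$'s, and that infinitude of $T$ forces the $i_k$ to be unbounded (otherwise only finitely many distinct values $p_S(\ulcorner A \res i \urcorner)$ would be available). Both of these follow from elementary properties of canonical indices of finite sets and the strict monotonicity of the principal function $p_S$, so this is routine once stated carefully; there is no real difficulty beyond making the decoding explicit.
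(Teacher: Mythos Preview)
Your proposal is correct and follows essentially the same approach as the paper: fix an infinite $T \subseteq U_{A,S}$ in $\ideal{I}$, invert $p_S$ on the elements of $T$ to recover canonical indices of initial segments of $A$, and decode to compute $A$ from $S \oplus T$.

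One small inaccuracy worth flagging: your claim that ``distinct $i$'s give distinct (indeed strictly increasing) canonical indices'' is not literally true, since $A \res i$ is being treated as a finite \emph{set}, and if $A \cap [i,j) = \emptyset$ then $A \res i = A \res j$ and hence $\ulcorner A \res i \urcorner = \ulcorner A \res j \urcorner$. This does not break your argument, however: what you actually use is that the infinitely many distinct elements of $T$ decode (via $p_S^{-1}$) to infinitely many distinct initial-segment-sets of $A$, and these, being linearly ordered by inclusion and unbounded, have union $A$. The paper phrases the same computation slightly differently, observing that $p_U(i) = p_S(x)$ forces $x = \ulcorner A \res j \urcorner$ for some $j \geq i$ (since the $i$-th element of $U$ is at least the $i$-th element of $U_{A,S}$, which already encodes an initial segment containing at least $i$ elements of $A$), so that one directly obtains an initial segment long enough to decide $A(m)$ for any given $m < i$.
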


\begin{proof}
Suppose~$U \subseteq U_{A,S}$ belongs to~$\ideal{I}$. Then~$U$ is a subset of~$S$, so for each~$i$ we can uniformly compute from~$U \oplus S$ an~$x$ such that~$p_U(i) = p_S(x)$. But by the definition of~$U_{A,S}$, this~$x$ must be~$\ulcorner A \res j \urcorner$ for some~$j \geq i$. In other words, given~$i$,~$U \oplus S$ can uniformly find a canonical index for a finite initial segment of~$A$ of size at least~$i$. Since~$A$ is computable from any infinite sequence of its initial segments, we conclude that~$A \leq_T U \oplus S$. Since~$\ideal{I}$ is an ideal and~$U,S \in \ideal{I}$ belong to~$\ideal{I}$, it follows that~$A \in \ideal{I}$.
\end{proof}

\begin{proposition}\label{P:subsets}
If~$\ideal{I} \subseteq \ideal{J}$ are ideals such that every infinite set in~$\ideal{J}$ has an infinite subset in~$\ideal{I}$, then~$\ideal{I} = \ideal{J}$.
\end{proposition}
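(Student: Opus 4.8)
The plan is to show that the hypotheses force every set of $\ideal{J}$ to already lie in $\ideal{I}$, using the coding gadget $U_{A,S}$ together with the preceding lemma. First I would take an arbitrary $A \in \ideal{J}$; the goal is to produce an infinite subset of $U_{A,S}$ lying in $\ideal{I}$, for a suitable $S \in \ideal{I}$, since then the lemma gives $A \in \ideal{I}$ and we are done (the inclusion $\ideal{I} \subseteq \ideal{J}$ being given).

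The natural choice is to fix any infinite $S \in \ideal{I}$; then $U_{A,S}$ is computable from $A \oplus S$, hence lies in $\ideal{J}$ (as $A, S \in \ideal{J}$ and $\ideal{J}$ is an ideal), and it is infinite. By hypothesis, every infinite set in $\ideal{J}$ has an infinite subset in $\ideal{I}$, so $U_{A,S}$ has an infinite subset $U \in \ideal{I}$. Now the preceding lemma applies verbatim: $U \subseteq U_{A,S}$, $U \in \ideal{I}$, and $S \in \ideal{I}$, so $A \in \ideal{I}$. Since $A \in \ideal{J}$ was arbitrary, $\ideal{J} \subseteq \ideal{I}$, and combined with $\ideal{I} \subseteq \ideal{J}$ this gives $\ideal{I} = \ideal{J}$.

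I do not expect a serious obstacle here: the whole argument is a one-line application of the lemma once one realizes that $U_{A,S}$ is the right object to feed into the hypothesis. The only point requiring a word of care is that $\ideal{I}$ must actually contain an infinite set — but every ideal contains $COMP$, hence contains $\omega$ itself (or any infinite computable set), so such an $S$ exists. (Equally, one could simply take $S = \omega$, since $\omega \in COMP \subseteq \ideal{I}$, which slightly simplifies the bookkeeping.) One might also remark that the statement is trivially true for the finite members of $\ideal{J}$, or note that the ideals here are always assumed to contain only subsets of $\omega$ and to be closed under $\leq_T$, so "infinite subset in $\ideal{I}$" is a meaningful hypothesis to invoke.

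In short, the proof is: \emph{given $A \in \ideal{J}$, let $S$ be any infinite set in $\ideal{I}$ (e.g.\ $S = \omega$); then $U_{A,S} \leq_T A \oplus S$ is an infinite member of $\ideal{J}$, so by hypothesis it has an infinite subset in $\ideal{I}$, whence $A \in \ideal{I}$ by the previous lemma; therefore $\ideal{J} \subseteq \ideal{I}$, and equality follows from $\ideal{I} \subseteq \ideal{J}$.}
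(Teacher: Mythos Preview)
Your proposal is correct and follows essentially the same argument as the paper: fix an infinite $S \in \ideal{I}$, observe that $U_{A,S} \leq_T A \oplus S$ lies in $\ideal{J}$, apply the hypothesis to get an infinite subset of $U_{A,S}$ in $\ideal{I}$, and invoke the preceding lemma to conclude $A \in \ideal{I}$. Your observation that $\ideal{I}$ always contains an infinite set (since $COMP \subseteq \ideal{I}$) is exactly the right justification for the one step the paper leaves implicit, and your remark that finite members of $\ideal{J}$ are handled trivially (being computable) is also apt.
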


\begin{proof}
If~$\ideal{J} = \emptyset$ there is nothing to prove, so assume otherwise and fix any infinite~$A \in \ideal{J}$. We shall show that~$A \in \ideal{I}$. By assumption, we may fix an infinite~$S \in \ideal{I}$. Then~$U_{A,S}$ is computable from~$S \oplus A$ and hence belongs to the ideal~$\ideal{J}$. By assumption,~$U_{A,S}$ has an infinite subset in~$\ideal{I}$, so~$A$ belongs to~$\ideal{I}$ by the lemma.
\end{proof}

%\begin{theorem}
%Let~$\ideal{I}$ and~$\ideal{J}$ be ideals. If~$\Sigma^0_n(\ideal{I}) \subseteq \Sigma^0_m(\ideal{J})$ then every~$m$-$\ideal{J}$-generic that computes the fixed exact pair for~$\ideal{I}$ computes an~$n$-$\ideal{I}$-generic.
%\end{theorem}

%The converse of the preceding theorem fails in a strong way.

\begin{proposition}\label{P:smallnocompute}
If~$\ideal{I} \subseteq \ideal{J}$ are arithmetical ideals and~$\ideal{I} \neq \ideal{J}$, then there is an~$\ideal{I}$-generic set that, for some~$n \in \omega$, computes no~$n$-$\ideal{J}$-generic set.
\end{proposition}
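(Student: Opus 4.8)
The plan is to isolate a single $\Sigma^0_n(\ideal{J})$-definable \emph{dense} collection of $\ideal{J}$-conditions all of whose stems are $\ideal{I}$-immune, and then to play it against an $\ideal{I}$-generic that is too weak to compute a subset of any arithmetical $\ideal{I}$-immune set. The key point, which I expect to carry the real content of the argument, is that \emph{every infinite $T\in\ideal{J}$ has an infinite $\ideal{I}$-immune subset that still lies in $\ideal{J}$}. Indeed, fix $A\in\ideal{J}\setminus\ideal{I}$ (possible since $\ideal{I}\subsetneq\ideal{J}$). If $T$ is already $\ideal{I}$-immune there is nothing to do; otherwise $T$ has an infinite subset $W\in\ideal{I}\subseteq\ideal{J}$, and then $U_{A,W}$ is an infinite subset of $W\subseteq T$, it is computable from $A\oplus W$ and hence lies in $\ideal{J}$, and it is $\ideal{I}$-immune by the lemma preceding Proposition~\ref{P:subsets} (an infinite subset of $U_{A,W}$ inside $\ideal{I}$ would force $A\in\ideal{I}$). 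Consequently the collection $\collection{C}$ of all $\ideal{J}$-conditions $(E,T)$ for which $T$ is $\ideal{I}$-immune is dense below every $\ideal{J}$-condition; and since $\ideal{I}$ and $\ideal{J}$ are arithmetical, ``$T$ is $\ideal{I}$-immune'' is an arithmetical predicate of the (coded) condition, so $\collection{C}$ is $\Sigma^0_n(\ideal{J})$-definable for some $n\in\omega$, which I fix.

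Next I would take $G$ to be any $\ideal{I}$-generic. Since $\ideal{I}$ is arithmetical, every arithmetical collection of $\ideal{I}$-conditions is $\Sigma^0_k(\ideal{I})$-definable for some $k$, so $G$ meets or avoids it; in particular $G$ meets every \emph{dense} arithmetical such collection. Now, for each $e\in\omega$ and each arithmetical $\ideal{I}$-immune set $C$, the collection of $\ideal{I}$-conditions forcing that $\Phi^{\dot G}_e$ is not an infinite subset of $C$ is dense: were some condition $(D,S)$ to force $\Phi^{\dot G}_e$ to be an infinite subset of $C$, then $\Phi^{D\cup S}_e$ would be an infinite subset of $C$ computable from $D\cup S\in\ideal{I}$, exhibiting an infinite subset of $C$ inside $\ideal{I}$ and contradicting the $\ideal{I}$-immunity of $C$. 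Since $G$ meets each of these collections — that is, $G$ satisfies, for every such $e$ and $C$, a condition forcing $\Phi^{\dot G}_e$ not to be an infinite subset of $C$ — no set computable from $G$ is an infinite subset of any arithmetical $\ideal{I}$-immune set. (As in the paper's other constructions, transferring a forced property of $\Phi^{\dot G}_e$ down to $\Phi^{D\cup S}_e$ is handled by the usual Mathias forcing lemmas; compare Lemma~\ref{L:density} and \cite{CDHS-2014}, Section~3.)

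Finally, suppose for contradiction that this $G$ computes an $n$-$\ideal{J}$-generic set $H$. Then $H$ is infinite (as is every $n$-$\ideal{J}$-generic, since the collections $\{(D,S):|D|>k\}$ are dense), and since $\collection{C}$ is a $\Sigma^0_n(\ideal{J})$-definable collection of $\ideal{J}$-conditions that is dense, $H$ cannot avoid $\collection{C}$ and so must meet it: there is $(E,T)\in\collection{C}$ with $E\subseteq H\subseteq E\cup T$. Then $H\setminus E$ is an infinite subset of $T$, and $T$ — being a member of the arithmetical ideal $\ideal{J}$ — is an arithmetical $\ideal{I}$-immune set; but $H\setminus E\leq_T H\leq_T G$, contradicting the previous paragraph. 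Hence no $\ideal{I}$-generic, and in particular not $G$, computes an $n$-$\ideal{J}$-generic, which is the proposition for this value of $n$. The only genuinely nonroutine step is the density of $\collection{C}$, i.e.\ the passage from ``$T\in\ideal{J}$'' to ``$T$ has an $\ideal{I}$-immune subset still in $\ideal{J}$'' via the coding set $U_{A,W}$; everything else is genericity bookkeeping together with the standard forcing lemmas already used in the paper.
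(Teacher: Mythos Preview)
Your strategy differs from the paper's in a useful way. The paper constructs a single $\ideal{I}$-generic $G$ by explicit stages: at stage $s=2\seq{e,i}$ (where $(E_i,T_i)$ enumerates $\collection{C}$) it asks whether some finite $F\subseteq S_s$ and some $x\notin E_i\cup T_i$ satisfy $\Phi^{D_s\cup F}_e(x)\downarrow=1$, and in the negative case observes that the $S_s$-c.e.\ set of possible $1$-outputs is contained in $E_i\cup T_i$, so $S_s$ would compute an infinite subset of $T_i$ if $\Phi^G_e$ were infinite. You instead argue via density and genericity that \emph{every} $\ideal{I}$-generic works, which is stronger. Incidentally, your density argument for $\collection{C}$ is more careful than the paper's: the lemma preceding Proposition~\ref{P:subsets} requires the set $S$ in $U_{A,S}$ to lie in $\ideal{I}$, and the paper applies it with $S=T\in\ideal{J}$; your case split (first passing to an infinite $W\subseteq T$ with $W\in\ideal{I}$) respects that hypothesis.

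There is, however, a genuine gap in your second density step. You claim that if $(D,S)$ forces ``$\Phi^{\dot G}_e$ is an infinite subset of $C$'' then $\Phi^{D\cup S}_e$ is itself an infinite subset of $C$. No standard Mathias forcing lemma gives this: $D\cup S$ is not generic, and forced $\Pi^0_2$ or $\Sigma^0_3$ statements do not in general transfer to it (there is no reason for $\Phi^{D\cup S}_e$ even to be total). The correct intermediate object is the $S$-c.e.\ set
\[
V=\{x:(\exists\text{ finite }F\subseteq S)\ \Phi^{D\cup F}_e(x)\downarrow=1\}.
\]
Forcing $\Phi^{\dot G}_e\subseteq C$ yields $V\subseteq C$ (no extension can produce a $1$-output at any $x\notin C$), and forcing infinitude yields that $V$ is infinite (for each $n$ some extension, hence some $D\cup F$, witnesses an $x>n$ in $V$). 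An infinite $S$-c.e.\ subset of $C$ has an infinite $S$-computable subset, which lies in $\ideal{I}$ and contradicts $\ideal{I}$-immunity of $C$. This is precisely the dichotomy driving the paper's explicit construction; with this correction your genericity argument goes through and yields the stronger conclusion.
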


\begin{proof}
Let~$\collection{C}$ be the collection of all~$\ideal{J}$-conditions~$(E,T)$ such that~$T$ has no infinite subset in~$\ideal{I}$. We claim this set is dense in the~$\ideal{J}$-conditions. Indeed, fix any~$\ideal{J}$-condition~$(E,T)$, and let~$A$ be any infinite set in~$\ideal{J} - \ideal{I}$. Then~$U_{A,T}$ is an infinite subset of~$T$ in~$\ideal{J}$, and since~$A \notin \ideal{I}$ it follows by the lemma that~$U_{A,T}$ has no infinite subset in~$\ideal{I}$. Hence,~$(E,U_{A,T})$ is an extension of~$(E,T)$ in~$\collection{C}$.

Since~$\ideal{I}$ and~$\ideal{J}$ are arithmetical, there is some~$n$ such that every~$n$-$\ideal{J}$-generic set meets~$\collection{C}$. Our goal, then, is to construct an~$\ideal{I}$-generic set~$G$ such that for all~$e$, either~$\Phi^G_e$ is not an infinite set or~$\Phi^G_e$ avoids~$\collection{C}$. This~$G$ will therefore not compute any~$n$-$\ideal{J}$-generic, as desired. We shall obtain~$G$ in the usual way, as~$\bigcup_s D_s$ from a suitably constructed sequence of~$\ideal{I}$-conditions~$(D_0,S_0) \geq (D_1,S_1) \geq \cdots$. List the elements of~$\collection{C}$ as~$(E_0,T_0),(E_1,T_1),\ldots$. Let~$(D_0,S_0) = (\emptyset,\omega)$, and assume that for some~$s \geq 0$ we have defined~$(D_s,S_s)$.

If~$s$ is even, we work to make~$G$ not compute any~$n$-$\ideal{J}$-generic. More specifically, say~$s = 2\seq{e,i}$. We work to either make~$\Phi^G_e$ not be an infinite set, or to make~$\Phi^G_e$ not meet~$\collection{C}$ via satisfying~$(E_i,T_i)$. To this end, ask if there exists a finite set~$F \subseteq S_s$ and an~$x \notin E_i \cup T_i$ such that~$\Phi^{D_s \cup F}_e(x) \downarrow~ = 1$. If so, choose some such~$F$ and~$x$ and let~$D_{s+1} = D_s \cup F$ and~$S_{s+1} = S_s - \varphi^{D_s \cup F}_e(x)$. Now since~$G$ will satisfy~$(D_{s+1},E_{s+1})$, we have that if~$\Phi^G_e$ is a set, it will contain~$x$ and hence not satisfy~$(E_i,T_i)$. If, on the other hand, no such~$F$ and~$x$ exist, let~$(D_{s+1},S_{s+1}) = (D_s,S_s)$. In this case,~$\Phi^G_e$ cannot be an infinite set, as otherwise~$S_s$ would compute an infinite subset of~$T_i$ and so contradict that~$T_i$ has no infinite subset in~$\ideal{I}$.

If~$s$ is odd, proceed as in the analogous case in the proof of Proposition~\ref{P:coneavoidance}.
\end{proof}

In the~$\Delta^0_2$ setting, we can now extend Proposition~\ref{P:smallnocompute} to non-nested ideals.

\begin{corollary}
If~$\ideal{I}$ and~$\ideal{J}$ are~$\Delta^0_2$ ideals with~$\ideal{J} \nsubseteq \ideal{I}$, then there is an~$\ideal{I}$-generic set that computes no~$3$-$\ideal{J}$-generic.
\end{corollary}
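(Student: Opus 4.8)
The plan is to combine two facts already in hand: cone avoidance for Mathias forcing over $\ideal{I}$ (Proposition~\ref{P:coneavoidance}), and the fact that every $\Delta^0_2$ ideal is $3$-generically-coded (Corollary~\ref{C:Delta2coded}), applied now to $\ideal{J}$. Unlike in Proposition~\ref{P:smallnocompute}, I would not try to build any diagonalizing collection of $\ideal{J}$-conditions, nor would I use any nestedness of the ideals: the idea is simply that \emph{every} $3$-$\ideal{J}$-generic drags all of $\ideal{J}$ along with it, whereas some $\ideal{I}$-generic can be kept off the cone above a prescribed set in $\ideal{J}\setminus\ideal{I}$, and these two facts are incompatible with that $\ideal{I}$-generic computing a $3$-$\ideal{J}$-generic.

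Concretely, I would argue as follows. First, invoke Corollary~\ref{C:Delta2coded}: since $\ideal{J}$ is $\Delta^0_2$ it is $3$-generically-coded, so every $3$-$\ideal{J}$-generic set computes every member of $\ideal{J}$. Next, use $\ideal{J}\nsubseteq\ideal{I}$ to fix a set $A\in\ideal{J}\setminus\ideal{I}$; in particular $A\notin\ideal{I}$. Then apply Proposition~\ref{P:coneavoidance} to the ideal $\ideal{I}$ and this $A$ to obtain an $\ideal{I}$-generic set $G$ with $A\nleq_T G$. Finally, suppose toward a contradiction that $G$ computes some $3$-$\ideal{J}$-generic $H$. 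Since $A\in\ideal{J}$ and $\ideal{J}$ is $3$-generically-coded, $H\geq_T A$, so $G\geq_T H\geq_T A$, contradicting the choice of $G$. Hence $G$ computes no $3$-$\ideal{J}$-generic, which is the desired conclusion.

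There is no genuinely hard step here — the whole argument is a short combination of earlier results — so the only things to attend to are bookkeeping. The level ``$3$'' in the conclusion is precisely the level furnished by Corollary~\ref{C:Delta2coded} for $\Delta^0_2$ ideals, and it is worth noting that only the hypothesis on $\ideal{J}$ is actually used: Proposition~\ref{P:coneavoidance} applies to an arbitrary ideal $\ideal{I}$, so the assumption that $\ideal{I}$ is $\Delta^0_2$ could in fact be dropped. The non-nested situation is handled ``for free'' through the choice of the witness $A\in\ideal{J}\setminus\ideal{I}$, and one should be aware that a substantially more delicate argument (in the spirit of, but harder than, Proposition~\ref{P:smallnocompute}) would be needed if one wanted to relax ``$\Delta^0_2$'' on $\ideal{J}$ to merely ``arithmetical'', since such a $\ideal{J}$ need no longer be $3$-generically-coded.
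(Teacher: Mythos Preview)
Your proposal is correct and matches the paper's proof essentially line for line: fix $A\in\ideal{J}\setminus\ideal{I}$, use Corollary~\ref{C:Delta2coded} to see that every $3$-$\ideal{J}$-generic computes $A$, and use Proposition~\ref{P:coneavoidance} to produce an $\ideal{I}$-generic that does not. Your side remark that the $\Delta^0_2$ hypothesis on $\ideal{I}$ is unused is also accurate.
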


\begin{proof}
Fix~$A \in \ideal{J} - \ideal{I}$. By Corollary~\ref{C:Delta2coded}, every~$3$-$\ideal{J}$-generic set computes~$A$, but by Proposition~\ref{P:coneavoidance}, there is a~$\mathcal{I}$-generic set that does not.
\end{proof}

%\begin{proposition}
%Let~$\ideal{I}$ be an ideal and~$A$ a set in~$\ideal{I}$. If every Mathias~$3$-$\ideal{I}$-generic set computes~$A$ then~$\mathcal{I}$ contains a modulus for~$A$.
%\end{proposition}
%
%\begin{proof}
%By assumption, there must be a~$\ideal{I}$-condition~$(D,S)$ forcing~$\exists e~\Phi_e^G = A$.
%\end{proof} \Mariya{I assume that this is some old statement, as the previous section deals with this kinds of questions already}

The analysis above naturally leads to the following question: 
\begin{question}
If $\ideal{J}\subseteq \ideal{I}$ then does every $\ideal{I}$-generic compute a $3$-$\ideal{J}$-generic? 
\end{question}

This question, although easily stated, is quite difficult to approach. For one thing, one must take into account the complexity of the exact pair representing the smaller ideal. Even if we restrict $\ideal{J}$ to the $\Delta^0_2$ degrees, there are uncountably many ideals of $\Delta^0_2$ degrees, and hence $\ideal{J}$  could turn out to have only very complex exact pairs.   We finish this article with a positive answer to the question above for the simplest case, when $\ideal{J} = COMP$, the ideal of all computable sets.

\begin{theorem}
Let~$\ideal{I}$ be an ideal and  let $n\geq 3$ be a natural number.  Every~$n$-$\ideal{I}$-generic  computes  an~$n$-$COMP$-generic set.
\end{theorem}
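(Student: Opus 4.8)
The plan is to produce the required $n$-$COMP$-generic as $H=\Psi^G$ for a single computable functional $\Psi$ that, on any oracle $X$, simulates a $COMP$-Mathias construction whose successive moves it reads off $X$, and then to show, using only the $n$-$\ideal{I}$-genericity of $G$, that $H$ meets or avoids every $\Sigma^0_n$-definable collection of $COMP$-conditions. Thus $\Psi^X$ maintains a descending sequence of $COMP$-conditions $(D_0,S_0)\geq(D_1,S_1)\geq\cdots$ starting from $(\emptyset,\omega)$, with reservoirs carried in the usual ``promise'' form (a condition is a pair $(D,e)$ with the side promise that $\Phi_e$ is total and infinite), so that $\Psi$ only ever runs the computable, timeout-guarded syntactic checks for one promise-condition extending another and never has to decide totality. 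At stage $s$ it decodes from $X$ an instruction that is either inert or a coded pair $(\widetilde D,\widetilde e)$, which is adopted as $(D_{s+1},S_{s+1})$ when it passes those checks; interleaved free-growth steps (which append the least available reservoir element, again with a growing timeout) guarantee that $H=\bigcup_s D_s$ is infinite. The decoding $X\mapsto(\text{instruction}_s)_s$ is the delicate ingredient and is addressed last.

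To see that $H$ is $n$-$COMP$-generic, fix a $\Sigma^0_n$-definable collection $\collection{C}$ of $COMP$-conditions and consider the collection $\collection{D}$ of $\ideal{I}$-conditions $(E,T)$ such that running $\Psi$ on the instructions already decoded from $E$ yields a state $(D',\Phi_{e'})$ with $(D',\Phi_{e'})\leq(D'',\Phi_{e''})$ for some $(D'',\Phi_{e''})\in\collection{C}$. The state is computable from $E$; comparing two promise-conditions is a conjunction of a $\Sigma^0_1$ and a $\Pi^0_2$ statement; and membership in $\collection{C}$ is $\Sigma^0_n$. Since $n\geq 3$, the $\Pi^0_2$ part is absorbed and $\collection{D}$ is $\Sigma^0_n$, hence $\Sigma^0_n(\ideal{I})$, so $G$ meets or avoids it. If $G$ meets $\collection{D}$, a finite initial segment of $G$ already drives $\Psi^G$ into a state below a condition of $\collection{C}$, so $H$ satisfies that condition and meets $\collection{C}$. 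If $G$ avoids $\collection{D}$ via $(E^*,T^*)$, let $(D^\flat,S^\flat)$ be the state reached by running $\Psi$ on the instructions decoded from $E^*$; every later state refines it, so $H$ satisfies $(D^\flat,S^\flat)$, and it remains to check that $(D^\flat,S^\flat)$ has no extension in $\collection{C}$ — whence $H$ avoids $\collection{C}$.

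This last point is where the coding property is used: if $(D^\flat,S^\flat)$ had an extension $(\widetilde D,\widetilde S)$ in $\collection{C}$, we would want to extend $(E^*,T^*)$ to an $\ideal{I}$-condition whose decoded instructions commit $\Psi$ to $(\widetilde D,\widetilde S)$, placing that extension into $\collection{D}$ and contradicting the choice of $(E^*,T^*)$. I therefore need the decoding to satisfy: for every $\ideal{I}$-condition $(E_0,T_0)$ and every target instruction there is an extension of $(E_0,T_0)$ forcing the next decoded instruction to be that target. This is the main obstacle, because a reservoir $T_0\in\ideal{I}$ may be arbitrarily sparse or structured while $\Psi$ sees $G$ and not $T_0$, so an instruction cannot be placed in a fixed finite window of $G$'s bits. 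The remedy is to spell each instruction out across infinitely many stages via a coding gadget tailored to survive arbitrary reservoirs — in the spirit of the $U_{A,S}$ constructions of Section~\ref{S:ideals} — so that any target is realizable by a suitable extension regardless of the current reservoir, while decoding stays computable from $G$ alone. Finally, applying the same meet-or-avoid dichotomy to the auxiliary collections of $\ideal{I}$-conditions that decode a commitment to a reservoir with false totality promise shows that $G$ is never forced into such a commitment, so the promise presentation is legitimate; this, together with the $\Pi^0_2$ comparison above, is where the hypothesis $n\geq 3$ is essential.
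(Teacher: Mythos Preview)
Your outline correctly identifies the architecture---build $H=\Psi^G$ via a descending sequence of promise-form $COMP$-conditions and transfer the meet/avoid dichotomy from a $\Sigma^0_n(\ideal{I})$ collection $\collection{D}$ back to $\collection{C}$---but the heart of the matter is exactly what you flag as the ``main obstacle'' and then wave away. You need: given any $\ideal{I}$-condition $(E_0,T_0)$ and any target pair $(\widetilde D,\widetilde e)$, an extension of $(E_0,T_0)$ whose decoded next instruction is $(\widetilde D,\widetilde e)$, while the decoder is a fixed computable map of $G$ alone. Since $T_0$ is an arbitrary infinite set in $\ideal{I}$ and the decoder does not see $T_0$, there is no way to force a prescribed output: whatever finite pattern the decoder looks for, $T_0$ may simply fail to contain the bits needed to realize it. The $U_{A,S}$ gadget does not help here---its decoding uses $S$ (via $p_S$), which is precisely what your decoder is not allowed to use. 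And once you ``spell each instruction out across infinitely many stages,'' the state after a finite segment $E^*$ is no longer determined, so both the definition of $\collection{D}$ and the claim that ``every later state refines $(D^\flat,S^\flat)$'' collapse.

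The paper's proof sidesteps the coding problem by a different mechanism: it never tries to encode arbitrary instructions in $G$. Instead it partitions $G$ into consecutive blocks of size $n{+}1$ and uses each block $\{s_0,\ldots,s_n\}$ as a tuple of stages in an iterated-jump approximation $\emptyset^{(n)}[s_0,\ldots,s_n]$ (the device of \cite[Theorem~5.2]{CDHS-2014}). The ``instruction'' at block $m$ is then whatever this approximation dictates: search $W_e^{\emptyset^{(n)}[m]}$ for a condition extending the current one, adopt it if the syntactic checks pass, and otherwise roll back. The point is that to force the right move one does not plant a code in $G$; one only thins the reservoir so that the next block gives a \emph{correct} approximation to the relevant finite fragment of $\emptyset^{(n)}$ and of $W_e^{\emptyset^{(n)}}$, and this can always be done inside any $T_0\in\ideal{I}$ (indeed inside any infinite set). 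That replacement of ``encode an arbitrary instruction'' by ``approximate $\emptyset^{(n)}$ well enough to look up the correct instruction'' is the missing idea in your proposal.
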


We wish to thank Rose Weisshaar, who noticed an omission in the original proof of this theorem. The proof presented below has been corrected.
%More or less in the original we failed to ensure the chosen preconditions were actually conditions.  In the proof below we do this using the sparseness of the generic we are given.

\begin{proof}
We use the following  idea from~\cite[Theorem 5.2]{CDHS-2014}.  For an arbitrary set $A$ we can approximate $A^{(n)}$ by approximating iterations of the jump of $A$ . We let $A^0[s_0] = A\upharpoonright s_0$.  We fix a uniform way to approximate the jump of a set and by induction define for all $m\leq n$: 
\[
A^{(m)}[s_0,\dots, s_{m}] = (A^{(m-1)}[s_0, \dots, s_{m-1}])'[s_{m}].
\]
If $X$ computes $A$ then $X$ computes $A^{(n)}[s_0, \dots, s_{n}]$ for all $n$. Furthermore, for all $x$ and $e$ there are arbitrarily large stages $s_1, \dots, s_{n}$ such that 
\[
A^{(n)} \upharpoonright x = A^{(n)}[s_0,\dots,s_{n}] \upharpoonright x
\]
and for any c.e. set $W$:
\[
W^{A^{(n)}} \upharpoonright x  = W^{A^{(n)}[s_0,\dots,s_{n}]} \upharpoonright x.
\]
These observations justify our adopting the following convention: when we write $A^{(m)}[s_0,\dots, s_{m}]$ we shall always mean that $s_0 < \cdots < s_m$.

%We further inductively define the notion of a correct approximation.  $A\upharpoonright x$ is always \emph{correctly approximated}. If $W$ is an arbitrary c.e. set then $W^{A^{(n)}} [s_0,\dots,s_{n}]\upharpoonright x $ is \emph{correctly approximated} if:
%\begin{enumerate}
%
%\item  $W^{A^{(n)}} \upharpoonright x  = W^{A^{(n)}[s_0,\dots,s_{n}]} \upharpoonright x$;
%\item If $y$ is the largest oracle query in the computation of $W^{A^{(n)}} \upharpoonright x$ then 
%$A^{(n)}[s_0,\dots, s_n]\upharpoonright y$ is also \emph{correctly approximated}. (Here $A^{(n)}$ is viewed as a set that is c.e. in $A^{(n-1)}$).
%\end{enumerate}
%
%
%Note that $A^{(n+1)}$ can decide if $W^{A^{(n)}}$ is correctly approximated. 
 
 Let $G$ be  an $n$-$\ideal{I}$-generic set.  We describe a procedure  which uses $G$ as an oracle to construct an $n$-$COMP$-generic set $E$. The procedure operates as follows: we view  $G$ as a sequence of blocks  each of size $n+1$. Let $\{G_m\}_{m<\omega}$ be a partition of $G$ into finite pieces $G_m$, each of size $n+1$ and with the property that $ \min G_{m+1}< \max G_m$.  We deal with each block $G_m$ in turn and define a finite set $D_m$. The resulting set $E$ will then be $\bigcup_{m<\omega} D_m$. 

We fix a list of requirements: $\{\mathcal{R}_e\}_{e<\omega}$, where each $\mathcal{R}_e$ states the following: 
\begin{quote}
If $R_e = W_e^{\emptyset^{(n)}}$ is a set of conditions then $E$ meets or avoids $R_e$.
\end{quote} 

Consider a block  $G_m = \{s_0, \dots, s_{n}\}$. We use this  piece  first  to produce an approximation  to  the sets $\emptyset^{(p)}$ for all $p\leq n$ using the approximating procedure described above.  Let $\emptyset^{(p)}[m]$  denote the set $\emptyset^{(p)}[s_0, s_1, \dots, s_n]$.
The set $D_m$ is obtained as follows: we construct  a sequence  $\alpha_0, \dots, \alpha_k$, where each $\alpha_i$ has the structure of a computable Mathias condition: it consists of a finite set $D_{\alpha_i}$ and an index  of a partial computable function $e_{\alpha_i}$. This sequence consists of attempts to satisfy the requirements, performed at previous stages, based on previous approximations using earlier blocks. During the current block action we preserve as much of the sequence constructed so far that still seems to consist of actual conditions  and  possibly extend it one step further to satisfy a new requirement.  We will, however, computably ensure that $D_{\alpha_i}\subseteq D_{\alpha_{i+1}}$ for all $i$ and set $D_m = D_{\alpha_k}$. 

The first block $G_0$ will produce  the  sequence with only one element $\alpha_0 = (\emptyset, \mathbb{N})$, or to be precise $(\emptyset, e)$, where $e$ is some fixed index of  the characteristic function of $\mathbb{N}$. Suppose that the  $(m-1)$-st block produces  the sequence $\alpha_0,\dots, \alpha_k$ and  outputs  the set $D_{m-1}$.  We now use the sparseness of $G$ to check whether we still believe that $\alpha_0,\dots, \alpha_k$ is a sequence of conditions: Let $t$ be the largest element in $G_m$ and suppose that $t$ is the $x$-th element of the oracle $G$. Let $S_{\alpha_i, t} = \set{y : \varphi_{e_{\alpha_i}, t}(y)\downarrow = 1}$. For every $i$, such that $0<i<k$ we check whether:
\begin{enumerate}
\item  $\varphi_{e_{\alpha_i}}$ converges on every element $y< x$ in less than $t$ steps  with output $0$ or $1$; 
\item  $S_{\alpha_i, t}$ contains at least $x$ many elements; 
\item  The maximal element in $D_{\alpha_i}$ is smaller than the minimal element in $S_{\alpha_i, t}$; 
\item  $S_{\alpha_i, t}\upharpoonright x\subseteq S_{\alpha_{i-1}, t}$; 
\end{enumerate}

 If we reach an index $j$, such that $\alpha_{j+1}$ does not satisfy the requirements above, then the sequence defined at block $m$ will be $\alpha_0, \dots, \alpha_j, \beta$, where  $\beta = (D_{m-1}, e_{\beta})$ and  $e_{\beta}$ is the index of the  function which outputs $0$ on elements less than  $\max D_{m-1}$ and otherwise behaves like the function with index $e_j$.  

If the sequence defined at block $m-1$ consists of valid Mathias conditions, i.e. for every $i<k$ we have that $\alpha_i$ satisfies the properties above, then  we try  to extend it to meet a   requirement. Using the block $m$ approximation to $\emptyset^{(n)}$, we search for a least number  $e\leq m$  such that
\begin{itemize}
\item  $\mathcal{R}_e$ is not satisfied by any member of the sequence $\alpha_0, \ldots, \alpha_k$;
\item there is an element $(D, e)\in R_e[m]$, such that 
 $(D, e)$ is a computable Mathias condition  which extends $\alpha_k$ according to  $\emptyset''[m]$;
\item $D_{m-1}\subseteq D$ and for every element $x\in D - D_{m-1}$ we have that $\varphi_{e_{\alpha_k}}(x)$ is defined in $t = \max G_m$ many steps and is equal to $1$.
\end{itemize}
If there is such an $e$  then we  set $\alpha_{k+1} = (D, e)$. Otherwise we end the sequence at  $\alpha_k$. This completes the block $m$-action of the procedure.

It remains to show that the constructed set $E = \bigcup_m D_m$ really satisfies each requirement $\mathcal{R}_e$. We use $G$'s genericity for this. We can view the procedure that we just described as a functional $\Gamma$, such that for every finite set $F$,  $\Gamma^{F}$ is a sequence $\alpha_0, \dots, \alpha_k$, where if $m$ is the number of size-$n+1$ blocks that $F$ can be partitioned in then this sequence is obtained by the  block-$m$ action of the  procedure with oracle $F$.

%During the block $m$ actions the functional $\Gamma$  defines the final sequence of Mathias condition using its approximation to $\emptyset''$. We will say that this computation is $2$-correct if the answers are approximated using an initial segment of $\emptyset''$ that is \emph{approximated correctly} as described above. Note that $2$-correct computations produce a sequence of computable Mathias conditions that will  be an initial segment of  every sequence defined at further blocks. 

Assume inductively that there is a condition $(F_0, T_0)$, such that $F_0\subseteq G\subseteq T_0$, such that $\Gamma^{F_0}$ is a monotone sequence of true computable Mathias conditions,  $T_0$ is so sparse that it witnesses this and all requirements $\mathcal{R}_i$ for $i<e$ are satisfied by some member of this sequence.  

For every finite monotone sequence  $\vec \alpha = \alpha_0\dots \alpha_k$,  where $\alpha_i = (D_{\alpha_i}, e_{\alpha_i})$ is a pair of a finite set and an index of a partial computable function, consider the set $C_{\vec \alpha}$ consisting of all  $\mathcal{I}$-conditions $(F,T)$ such that  $T$ is so sparse that  it witnesses that $\vec \alpha$ is a monotone sequence of computable Mathias conditions: for every $i\leq k$ and every $n\geq |F|$, if $t$ is the $n$-th element of $F\cup T$ then:
\begin{enumerate}
\item $\varphi_{e_{\alpha_i}}$ converges on every element $y< x$ in less than $t$ steps  with output $0$ or $1$;  
\item  $S_{\alpha_i, t} = \set{y : \varphi_{e_{\alpha_i}, t} (y)\downarrow = 1}$ contains at least $x$ many elements;  
\item the maximal element in $D_{\alpha_i}$ is smaller than the minimal element in $S_{\alpha_i, t}$; 
\item if  $i>0$ then $S_{\alpha_i, t}\upharpoonright x \subseteq S_{\alpha_{i-1}, t})$. 
\end{enumerate}
Note that if $\vec \alpha$ is a monotone sequence of computable Mathias conditions, then $C_{\vec \alpha}$ is a dense set. In fact, to find an extension of a condition in $C_{\vec \alpha}$ we only need to thin the reservoir, i.e.  for every $(F, T)$ there is an extension $(F, T')\in C_{\vec\alpha}$. 

Consider the  set $\mathcal{C}$ of conditions $(F, T)$ such that $F_0\subseteq F$  and  $\Gamma^{F} = \vec \alpha$ is a  sequence of computable Mathias conditions with a member  in $R_e$ and $T$ is sparse enough to witness this: $(F,T)\in C_{\vec \alpha}$. This set of conditions  is  $\Sigma_n(\ideal{I})$-definable. If $G$ meets $\mathcal{C}$ via $(F,T)$ then  $E$ meets $R_e$ via  one of the conditions in the sequence $\Gamma^F$, because  the sparseness of $T$ guarantees that $\Gamma^F$ is an initial segment of $\Gamma^{G\upharpoonright l}$ for every extension $G\upharpoonright l$ of $F$. 

Suppose that $G$ avoids $\mathcal{C}$ via $(F,T)$. Without loss of generality we may assume $F_0\subseteq F$. Consider   $\Gamma^F$ and let $\alpha_0, \dots, \alpha_k$ be the sequence defined by $\Gamma$ during the computation on the last block of $F$. Let  $j$ be the largest index  such that $\alpha_0, \dots, \alpha_j$ is an initial segment of all further sequences defined at further blocks with oracle $G$. Consider the block of $G$, say $G\upharpoonright l$, where $\alpha_j$ is the last condition  in the computation  $\Gamma^{G\upharpoonright l}$.  Suppose that $\alpha_j$ has an extension $\beta \in R_e$. Let  $(G\upharpoonright l, T^*)$ be an extension of $(F,T)$ that is in $C_{\alpha_0, \dots \alpha_j, \beta}$, such that for the first block  $s_0 < s_1< \cdots < s_n$ of $T^*$ we have that $\beta\in R_e[s_0, s_1, \dots, s_n]$ and furthermore  for every $x\in D_{\beta}$, $x$ can be verified to belong to $D_{\alpha_i}\cup S_{\alpha_i}$ in $s_n$ many steps. 

Now let $(F', T')$ be such that $F' = G\upharpoonright l  \cup{s_1, \dots, s_n}$ and $T'$ is obtained from $ T^*$ by removing all elements less than or equal to $s_n$.  It follows that $(F', T')$ extends $(F,T)$ and belongs to $\mathcal{C}$, contradicting our assumptions.  Thus $\alpha_j$ avoids $R_e$. This completes the proof. 
\end{proof}

%\bibliography{/Users/damir/Documents/Papers/Papers.bib}
%\bibliographystyle{plain}

\end{document}